\documentclass[11pt,a4paper,reqno]{amsart}  
\usepackage[utf8]{inputenc}
\usepackage{graphicx} 
\usepackage[left=1.75cm,right=1.75cm]{geometry}

\title{Almost sparse operators and Bergman projectors on trees}

\author[Conde-Alonso]{Jos\'{e} M. Conde Alonso}
\address{Jos\'{e} M. Conde Alonso \hfill\break\indent 
 Departamento de Matem\'aticas \hfill\break\indent 
 Universidad Aut\'onoma de Madrid \hfill\break\indent 
 C/ Francisco Tom\'as y Valiente sn\hfill\break\indent 
 28049 Madrid, Spain}
\email{jose.conde@uam.es}

\author[Monti]{Matteo Monti}
\address{Matteo Monti \hfill\break\indent 
Dipartimento di Matematica e Applicazioni \hfill\break\indent
Università degli studi di Milano-Bicocca \hfill\break\indent
Via Roberto Cozzi 55\hfill\break\indent
20125 Milano, Italy}
\email{matteo.monti@unimib.it}

\author[Rizzo]{Elena Rizzo}
\address{Elena Rizzo\hfill\break\indent
Departamento de An\'alisis Matem\'atico\hfill\break\indent
Facultad de Ciencias\hfill\break\indent
Universidad de M\'alaga\hfill\break\indent
Bulevar Louis Pasteur, s/n, Campus de Teatinos\hfill\break\indent
29071 Málaga, Spain}
\email{elena.rizzo@uma.es}

\usepackage[utf8]{inputenc}

\usepackage{amssymb,amsmath,amsopn,amsfonts,pb-diagram,accents,xcolor,array,hyperref,mathtools,enumitem,bm,bbm,mathrsfs,amscd,comment,algpseudocode,multicol,float,tikz}

\usepackage[subnum]{cases}

\newtheorem{theorem}{Theorem}[section]
\newtheorem{lemma}[theorem]{Lemma}
\newtheorem{definition}[theorem]{Definition}
\newtheorem{example}[theorem]{Example}
\newtheorem{lem}[theorem]{Lemma}
\newtheorem{prop}[theorem]{Proposition}

\newtheorem{rem}[theorem]{Remark}

\numberwithin{equation}{section}

\newcommand{\N}{\mathbb{N}}

\newcommand{\R}{\mathbb{R}}
\newcommand{\C}{\mathbb{C}}

\newcommand{\avg}[2]{\langle{#1}\rangle_{#2}}

\newcommand{\X}{\mathfrak{X}}
\newcommand{\G}{\mathfrak{G}}
\newcommand{\T}{\mathcal{T}}
\newcommand{\U}{\mathcal{U}}
\newcommand{\Qp}{\dot{Q}}
\newcommand{\Q}{\mathcal{Q}}

\newcommand{\anc}[2]{{#1}^{(#2)}}

\newcommand{\Ss}{\mathcal{S}}

\newcommand{\As}{\mathcal{A}_{\Ss}} 

\newcommand{\Proj}{\mathcal{P}}

\newcommand{\BBv}{\widetilde{{B}_p}}

\allowdisplaybreaks

\begin{document}

\subjclass[2020]{05C05,42B20,42B35,46E22}

\allowdisplaybreaks

\begin{abstract}
We develop a sparse domination framework adapted to nested families in
general measure spaces, possibly including point masses. We show that the Carleson condition is equivalent to a slight generalization of the usual sparse condition, and use this characterization to obtain one- and two-weight estimates for the associated averaging operators, together with converse testing conditions. We apply our general results to integral operators on locally finite trees. Trees come naturally equipped with two nested geometries: sectors and pruned
sectors. We prove pointwise sparse domination of natural versions of the harmonic Bergman projector by the corresponding averaging operators, depending on whether the measure on the tree is doubling or not, and show that the distinction is necessary in general. For the positive Bergman projector, we obtain pointwise comparability. As a consequence, we obtain new weighted inequalities and boundedness results for the Bergman projector and a class of Toeplitz-like multipliers. 
\end{abstract}

\maketitle

\section*{Introduction}

Let $(\G,\mu)$ be a diffuse measure space, and $\Ss =\{S_j\}_j$ be a family of sets. The Carleson packing condition
$$
\sum_{S \in \Ss: S \subseteq \Omega} \mu(S) \leq \Lambda \mu\left(\Omega\right), \qquad\text{for all}\qquad \Omega \subseteq \G,
$$
is equivalent to $\Ss$ being $1/\Lambda$-sparse: for each $S \in \Ss$, there exists $E_S \subseteq S$ such that $\mu(E_S) \geq 1/\Lambda \mu(S)$ and such that the family $\{E_S\}_{S\in\Ss}$ is disjoint. This was shown in \cite{Verb96,LN2019} for dyadic cubes on $\mathbb{R}^n$ and in general in \cite{Hann18} (see also \cite{Lorist,Rey}). Sparse families of sets have found widespread use in harmonic analysis after Lerner's seminal contributions \cite{Lerner13,Lerner13-2}, which proposed to dominate -- in a suitable sense -- complicated operators $T$ by the averaging operators
$$
\As f(x) = \sum_{S\in \Ss} \langle f \rangle_S 1_S(x),
$$
whose quantitative behavior is much easier to understand. Above and in what follows, the integral averages
$$
\avg{f}{S}=\frac{1}{\mu(S)} \int_S f \; d\mu
$$
are always computed with respect to the measure $\mu$, that we omit from the notation, and we only consider sets $S$ with $0<\mu(S)<\infty$. A reasonably complete account of the trend that is now called sparse domination can be found in \cite{Pereyra,Duarte} and the references therein.

The equivalence between the Carleson packing condition and the sparsity one is useful because the former is easier to check in practice, while the latter is easier to use when working with $\As$. The hypothesis of $(\G,\mu)$ being diffuse is necessary for it to hold. In the presence of point masses, as in discrete contexts, a suitable replacement needs to be found. One option is to allow the existence of the sets $E_S$ to be replaced by that of functions $0 \leq f_S \leq 1$ whose sum is bounded by $1$. Another, simpler one, which will be what we choose in the sequel, is to simply allow the sets $E_S$ some bounded overlap. It turns out that the resulting notion of almost sparsity leads to a theory of operators $\As$ with similar properties to the classical ones. To that end, given a nested family $\Ss$, define
$$
\mathcal{C}_\Ss := \Bigl\{\nu \mbox{ measure on } \G: \mbox{ there exists } \Lambda \geq 1 \mbox{ such that } \Ss \mbox{ is } \Lambda\mbox{-Carleson with respect to }\nu\Bigr\}.
$$
Then the operator $\As$, defined with respect to any $\nu \in \mathcal{C}_\Ss$, satisfies the same $L^p$-boundedness results, weak-type bounds, and weighted inequalities -- with one or two weights -- as the classical sparse operators with respect to dyadic cubes and with the Lebesgue measure. Moreover, given any $\nu \in \mathcal{C}_\Ss$, the natural $A_\infty$-type class of weights for $\As$ has a simple geometric interpretation: it is just the class of weights $w$ such that $w\; d\nu \in \mathcal{C}_\Ss$.

We next explore applications of almost sparse operators in the discrete setting. The underlying measure space is a locally finite tree $\X$, hooked to a point at infinity. Such $\X$ comes equipped with two useful, distinct families of nested sets. A sector $T_x$ contains a vertex $x$ and everything below it; the pruned sector $U_x=T_x\setminus\{x\}$ contains the descendants but not
the vertex itself. Both the family of sectors $\T$ and the family of pruned sectors $\U$ are nested. The first one corresponds to the family of balls with respect to the Gromov metric on the tree, while the second corresponds to the discrete analogue of the bottom half of the family of Carleson boxes on the upper half-plane. We also work with their union $\Q=\T\cup\U$, which is again a nested family. We have
$$
\mathcal{C}_\Q=\mathcal{C}_\T=\mathcal{C}_\U,
$$
but for each fixed $\nu \in \mathcal{C}_\Q$, it may very well be that the sets in $\T$ have very different measure to those in $\U$ which are very close to them. In particular, $T_x$ and $U_x$ may have noncomparable measure. In our first example, this happens if the number of successors of each vertex $x$ is not a bounded quantity over $\X$. In that case, the measure space $(\X,\nu)$ is not dyadically doubling, and that is reflected in the results that we obtain. The distinction between both cases appears naturally in kernel estimates. For two vertices $x,y$, their paths towards the fixed end first meet at their confluent $x\wedge y$.  If one vertex lies in the sector of the other, the relevant scale is $\nu(T_{x\wedge y})$; if they lie in different branches below their
confluent, it is $\nu(U_{x\wedge y})$. Indeed, if we write the almost sparse operator
$$
\mathcal{A}_\Q f(x) = \sum_{Q \in \Q} \avg{f}{Q} 1_Q(x) = \sum_{y\in\X} K_{\mathcal{A}_\Q}(x,y) f(y) \nu(y),
$$
we find that 
\[
K_{\mathcal{A}_\Q}(x,y) \sim
\begin{dcases*}
    \nu(T_{x\wedge y})^{-1}, & $x\wedge y \in \{x,y\}$,
    \\\nu(U_{x\wedge y})^{-1},& \text{otherwise}.
\end{dcases*}
\]

In the body of the paper, we show that the kernel $K_\mathcal{P}$ of the harmonic Bergman projector on a radial tree equipped with the natural measure $\mu_\alpha$, $\alpha>1$, studied in
\cite{CADMMRV2025,DMMV2023}, satisfies $|K_{\mathcal{P}}| \lesssim K_{\mathcal{A}_\Q}$, and so we recover a simpler version of the main sparse domination result in \cite{CADMMRV2025}. Additionally, we obtain a pointwise equivalence theorem for the appropriate positive Bergman projector. Pointwise dyadic domination of classical Bergman-type operators appears in \cite{APR2017,RTW2017}; see also \cite{Brocchi2025} for refined two-weight estimates. Radiality, a condition that seems to be close to optimal, makes the harmonic basis and its kernel accessible, but it does not force doubling when the number of successors per vertex is unbounded. It turns out that $K_{\mathcal{A}_\Q} \sim K_{\mathcal{A}_\T}$ exactly if the opposite happens, illustrating the need for a different sparse domination principle in the nondoubling setting. Our formulation not only simplifies the sparse domination result from \cite{CADMMRV2025}, that we can now attain with a strikingly simple argument, but it improves it: the class of weights for which we now find $L^p$-estimates is larger, and optimal. Moreover, we obtain two-weight estimates, which allows us to consider certain estimates for harmonic Toeplitz multipliers. 

We also consider the forward harmonic Bergman projector associated with a flow measure, introduced in \cite{OS2026}. Here harmonicity averages only over successors, rather than over all neighbours, and so the resulting operator is of order 1. We obtain sparse domination for $\mathcal{P}$ and equivalence for the positive version in terms of $\mathcal A_\Q$. This is entirely new in this setting and entails the usual weighted corollaries as well. 

We end the paper with a discussion of the weight classes arising in our weighted inequalities. These are defined as intersections of the classical Muckenhoupt classes associated with $\T$ and $\U$, and we investigate whether any inclusion between the two underlying classes can hold. As suggested by the necessity of introducing $\U$ in the unbounded setting, no such containment occurs in general when the branching is unbounded. Thus, also at the level of weighted theory, the two geometries encoded by $\T$ and $\U$ remain genuinely distinct. The rest of the paper is organized as follows: Section \ref{sec:sec1} contains the general theory of almost sparse operators and a general sparse comparison principle. The applications to trees are described in Section \ref{sec:sec2}. 


\section{Almost sparse operators} \label{sec:sec1}

\noindent\textbf{Almost sparse families.} Fix a measure $\nu$ on $\G$, which may have atoms. We always work with countable families $\Ss$ of measurable subsets of $\G$ such that $0<\nu(S)<\infty$ for each $S\in\Ss$, and which are nested: given $S,P \in \Ss$, then $S\cap P\in \{\emptyset,S,P\}$. Such an $\Ss$ is said to be almost sparse if for every  $0<\eta<1$ there exists $N\in \mathbb N$ such that for every $S\in\Ss$, there exists $E_S\subseteq S$ such that $\nu(E_S)\geq \eta\nu(S)$, and the sets $E_S$ satisfy
\begin{equation} \label{eq: alm sp}
\sum_{S\in\Ss}1_{E_S}\leq N. 
\end{equation}

\begin{lem}\label{lem: equiv Carl}
    $\nu \in \mathcal C_{\Ss}$ if and only if $\Ss$ is almost sparse with respect to $\nu$.
\end{lem}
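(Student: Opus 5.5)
The plan is to prove the two implications separately. Almost sparse $\Rightarrow$ Carleson follows by integrating the overlap bound \eqref{eq: alm sp}. Carleson $\Rightarrow$ almost sparse follows by the classical construction that removes from each $S$ its descendants a fixed number of generations below, with the overlap controlled by counting generations. The only sets $\Omega$ at which the Carleson condition will actually be used in the second implication are $\Omega=S\in\Ss$.

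\emph{Almost sparse $\Rightarrow$ Carleson.} Take $\eta=1/2$ and let $N$ be the corresponding integer. For any measurable $\Omega$,
$$\sum_{S\subseteq\Omega}\nu(S)\le 2\sum_{S\subseteq\Omega}\nu(E_S)=2\int_\Omega\sum_{S\subseteq \Omega}1_{E_S}\,d\nu\le 2N\nu(\Omega).$$
Hence $\Ss$ is $2N$-Carleson and $\nu\in\mathcal C_\Ss$.

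\emph{Carleson $\Rightarrow$ almost sparse: tree structure.} Assume $\Ss$ is $\Lambda$-Carleson. I first set up a tree structure on $\Ss$.
\begin{itemize}
\item If $S'\subsetneq S''\subseteq S$ all lie in $\Ss$, then any chain of elements of $\Ss$ contained in $S$ has at most $\Lambda\nu(S)/\nu(S')$ members above $S'$. This is because each such member has measure at least $\nu(S')$, and by Carleson with $\Omega=S$ their measures sum to at most $\Lambda\nu(S)$.
\item Consequently, every proper $\Ss$-subset of $S$ is contained in a maximal proper $\Ss$-subset. These maximal subsets are the children of $S$; they are pairwise disjoint by nestedness.
\item The elements of $\Ss$ between $S'$ and $S$ form a finite chain, which is totally ordered by nestedness. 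So the generation of $S'$ below $S$ is well defined as the length of this chain.
\item Let $D_j(S)$ be the union of the $j$-th generation descendants of $S$. These sets decrease in $j$, since every $(j+1)$-th generation set lies in a $j$-th generation one.
\item The $j$-th generation sets are disjoint for each fixed $j$. Carleson with $\Omega=S$ therefore gives
$$k\,\nu(D_k(S))\le\sum_{j=1}^k\nu(D_j(S))\le\sum_{S'\subsetneq S}\nu(S')\le\Lambda\nu(S).$$
\end{itemize}

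\emph{Carleson $\Rightarrow$ almost sparse: choice of $E_S$.} Given $\eta\in(0,1)$, choose an integer $k\ge\Lambda/(1-\eta)$ and set $E_S=S\setminus D_k(S)$. Then
$$\nu(E_S)\ge\nu(S)-\frac{\Lambda}{k}\nu(S)\ge\eta\nu(S).$$

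\emph{Bounded overlap.} The sets of $\Ss$ containing a point $x$ form a chain. Suppose $x\in E_{S_1}\cap\dots\cap E_{S_{k+1}}$ with $S_1\supsetneq\dots\supsetneq S_{k+1}$. Each step down the chain increases the generation by at least one, so $S_{k+1}$ lies at generation at least $k$ below $S_1$. It is therefore contained in some $k$-th generation descendant of $S_1$, giving $x\in D_k(S_1)$. This contradicts $x\in E_{S_1}$. Hence \eqref{eq: alm sp} holds with $N=k$.

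The main obstacle is the bookkeeping of the tree structure: the existence of children and the finiteness of chains, which must come from the Carleson condition since $\Ss$ is an arbitrary countable nested family. The measure estimate itself is routine once generations are defined.
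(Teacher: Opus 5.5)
Your proof is correct and is essentially the paper's argument. Under the Carleson condition your set $S\setminus D_k(S)$ coincides with the paper's $E_S=\{x\in S\colon \#\{P\in\Ss\colon x\in P\subseteq S\}\leq k\}$, your bound $k\,\nu(D_k(S))\leq\Lambda\nu(S)$ is the same Chebyshev step, and your overlap argument with a chain $S_1\supsetneq\dots\supsetneq S_{k+1}$ is identical. The only difference is that the paper counts $\#\{P\in\Ss\colon x\in P\subseteq S\}$ pointwise and integrates it, which makes your preliminary construction of children, generations and finiteness of chains unnecessary.
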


\begin{proof}
    The almost sparsity of $\Ss$ clearly implies $\nu \in \mathcal{C}_\Ss$. Indeed,
    \[ \sum_{\substack{P\in\Ss\\ P\subseteq S}}\nu(P)\leq \frac 1\eta \sum_{\substack{P\in\Ss\\ P\subseteq S}}\nu(E_P)=\frac 1\eta \int_S\sum_{\substack{P\in\Ss\\ P\subseteq S}}1_{E_P}(x) \; d\nu(x)\leq \frac N\eta \nu(S).    \]
    To prove the converse, choose a Carleson constant $\Lambda$, fix $0<\eta<1$, and let $N=\lfloor\Lambda(1-\eta)^{-1}\rfloor$. For every $S\in \Ss$, we define 
    \[ \mathcal{E}_S(x):=\{P\in \Ss\colon x\in P\subseteq S\},\qquad E_S:=\{x\in S\colon \#\mathcal{E}_S(x)\leq N\},\]
    with the convention $\mathcal E_S(x)=\emptyset$ when $x\not \in S$. We have
    \begin{align*}
        \Lambda\nu(S)\geq \sum_{\substack{P\in\Ss\\ P\subseteq S}}\nu(P)= \int_{S} \#\mathcal{E}_S(x)\; d\nu (x) \geq \int_{S \setminus E_S} 
       \#\mathcal{E}_S(x)\; d\nu(x)\geq (N+1)\nu(S \setminus E_S), 
    \end{align*}
Hence we have that
     \[\nu(E_S)=\nu(S)-\nu(S \setminus E_S)\geq \Big(1-\frac \Lambda {N+1}\Big)\nu(S)\geq\eta\nu(S).\]
    Furthermore, for every $x\in\X$ it is easy to see that $\#\{S\in \Ss\colon x\in S,\#\mathcal{E}_S(x)\leq N\}\leq N$. Indeed, if it were greater than $N$, then there would exist $S_1\subsetneq\dots\subsetneq S_{N+1}\in\Ss$ distinct such that $\#\mathcal{E}_{S_j}(x)\leq N$ and $x\in S_j$ for every $j=1,\dots,N+1$. Thus we would have that $\#\mathcal{E}_{S_{N+1}}(x)\geq N+1$, a contradiction.
    Therefore,
    $$
        \sum_{S\in \Ss}1_{E_S}(x)=\#\{S\in \Ss\colon x\in E_S\}=\#\{S\in\Ss\colon x\in S,\mathcal{E}_S(x)\leq N\}\leq N.
    $$
\end{proof}

We note that the definition of the sets $E_S$ depends only on the geometry and is independent of $\nu$. Also, by construction, the overlap number $N$ must be larger than $\Lambda-1$, where $\Lambda$ is the Carleson constant. On the other hand, if $\Lambda<2$ then for certain choices of $\eta$, namely $\eta<1-\Lambda/2$, we can choose $N=1$. In that case $\{E_S\}_{S\in\Ss}$ is disjoint and $\Ss$ is $\eta$-sparse.

\begin{prop}\label{prop: somma sect}
    Let $\nu\in \mathcal C_{\Ss}$ with constant at most $\Lambda$. For every $S\in\Ss$ we have
    \[
\sum_{\substack{P\in\Ss\\P\supseteq S}} \frac{1}{\nu(P)} \leq \frac{\Lambda^2}{\nu(S)}.
    \]
\end{prop}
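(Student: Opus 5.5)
The plan is to reduce the statement to a purely numerical inequality along a chain. Since $\Ss$ is nested, all sets $P\in\Ss$ with $P\supseteq S$ contain $S$, so any two of them intersect, and hence they are totally ordered by inclusion. The sum is a sum of nonnegative terms over a countable index set. It is therefore the supremum of its finite partial sums, and it suffices to prove the bound for every finite subchain
$$S\subseteq P_0\subsetneq P_1\subsetneq\dots\subsetneq P_k.$$
It is enough to treat chains with $P_0=S$, since adding $S$ only increases the left-hand side. Write $a_j=\nu(P_j)$ and $A_j=\sum_{i\le j}a_i$.

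The key input is the Carleson condition applied with $\Omega=P_j$. Since every $P_i$ with $i\le j$ is a member of $\Ss$ contained in $P_j$, we get $A_j\le\Lambda a_j$. This gives two consequences. First, $1/a_j\le \Lambda/A_j$. Second, $A_j-A_{j-1}=a_j\ge A_j/\Lambda$, that is,
\[
A_{j-1}\le \Bigl(1-\frac1\Lambda\Bigr)A_j .
\]
Iterating from $A_0=a_0=\nu(S)$ yields $A_j\ge (1-1/\Lambda)^{-j}\nu(S)$ when $\Lambda>1$. If $\Lambda=1$, the inequality forces $A_{j-1}=0$, which is impossible since $\nu(P_0)>0$. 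So in that case the chain has length one and the claim is trivial.

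Combining the two consequences gives
\[
\sum_{j=0}^k\frac1{a_j}\le \Lambda\sum_{j=0}^k\frac1{A_j}\le \frac{\Lambda}{\nu(S)}\sum_{j\ge0}\Bigl(1-\frac1\Lambda\Bigr)^j=\frac{\Lambda^2}{\nu(S)}.
\]
Taking the supremum over finite subchains concludes the proof.

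The only step needing care is to compare each $a_j$ with the cumulative sums $A_j$ rather than with $a_{j-1}$. Comparing consecutive terms directly does not give geometric growth, since consecutive measures can be equal. The cumulative sums, by contrast, grow geometrically, and this yields exactly the constant $\Lambda^2$.
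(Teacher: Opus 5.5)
Your proof is correct and is essentially the paper's argument. The paper also works with the cumulative sums $\Sigma_K=\sum_{\ell\le K}\nu(P_\ell)$ along the chain $S=P_0\subsetneq P_1\subsetneq\cdots$. It applies the Carleson condition on $P_K$ to get $\Sigma_{K-1}\le(1-\Lambda^{-1})\Sigma_K$ and $\nu(P_K)\ge\Lambda^{-1}\Sigma_K$, and then sums the resulting geometric series. Your reduction to finite subchains and your separate treatment of $\Lambda=1$ are extra care that the paper leaves implicit, since it simply enumerates the supersets of $S$ as a sequence.
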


\begin{proof}
    Since $\Ss$ is nested, we have that the family of $P\in\Ss$ that contain a fixed $S\in\Ss$ is an ordered nested sequence $S=P_0\subsetneq P_1\subsetneq\dots\subsetneq P_K\subsetneq\dots$. For simplicity, we set 
    \[\Sigma_K=\sum_{\ell=0}^K\nu(P_\ell),\qquad K\in\mathbb N.\]
    For $K\in\mathbb N$, since $\nu \in\mathcal C_{\Ss}$, we have that $\Sigma_K\leq \Lambda(\Sigma_K-\Sigma_{K-1})$. Hence we get $\Sigma_{K-1}\leq (1-\Lambda^{-1})\Sigma_K$ and iterating
    one has
    \[\nu(P_K)=\Sigma_K-\Sigma_{K-1}\geq \frac 1{\Lambda}\Sigma_K\geq \frac 1{\Lambda}\Big(1-\frac 1{\Lambda}\Big)^{-K}\nu(S). \]
    Finally we conclude by
    \[\sum_{\ell=0}^{\infty}\frac 1{\nu(P_\ell)}\leq \frac \Lambda{\nu(S)}\sum_{\ell=0}^{\infty}\Big(1-\frac 1{\Lambda}\Big)^\ell=\frac {\Lambda^2}{\nu(S)}.\]
\end{proof}

\noindent\textbf{Weighted estimates for averaging operators.} Given $\Ss$ and $\nu \in \mathcal{C}_\Ss$, the associated positive almost sparse operator is 
\[
\mathcal A_\Ss f(x)=\sum_{S\in\Ss}\avg {|f|}S 1_S(x),\qquad x\in\G.
\]

Since $\Ss$ is nested, the maximal operator
$$
\mathcal{M}_\Ss f(x) = \sup_{x\in S\in  \Ss} \avg{|f|}{S},
$$
with averages defined with respect to any measure $\mu$, is bounded on $L^p(\mu)$ for all $1<p<\infty$ and is of weak type $(1,1)$, with constants independent of $\mu$. We consider the standard Muckenhoupt classes of weights defined in terms of $\Ss$:

\begin{align*}
    A^{\Ss}_p(\nu)&=\Big\{\omega\colon \G \to (0,+\infty)\colon \omega>0,[\omega]_{p,\Ss}:=\sup_{S\in\Ss}\avg{\omega}{S}\avg{\omega^{-\frac 1{p-1}}}{S}^{p-1}<+\infty\Big\},\\
    A^{\Ss}_1(\nu)&=\Big\{\omega\colon \G \to (0,+\infty)\colon \omega>0,[\omega]_{1,\Ss}:=\sup_{S\in\Ss}\avg{\omega}{S}\|\omega^{-1}1_S\|_\infty<+\infty\Big\},\\
    A^{\Ss}_\infty(\nu)&=\Big\{\omega\colon \G \to (0,+\infty)\colon \omega>0,[\omega]_{\infty,\Ss}:=\sup_{S\in\Ss}\frac 1{\omega(S)}\int_{S}\mathcal{M}_{\Ss} (\omega 1_S)(x) d\nu(x)<+\infty\Big\}.
\end{align*}
We use the notation $A_p^{\Ss}=A_p^{\Ss}(\nu)$ when $\nu$ is clear from the context. The $L^p$-boundedness of $\mathcal{M}_\Ss$, $1<p<\infty$, can be used to prove that $\As$ maps $L^p(w)$ to itself whenever $w\in A^{\Ss}_p$. Regarding $A^{\Ss}_\infty(\nu)$, an adaptation of the Carleson embedding theorem yields the geometric characterization claimed in the Introduction.

\begin{lemma}\label{lem:CET}
    Let $\nu\in\mathcal C_{\Ss}$ and $\Lambda>1$ be such that $\Ss$ is $\Lambda$-Carleson in $(\G,\nu)$. For any measurable $f\geq 0$ and $S\in\Ss$, we have
    \[\sum_{\substack{P\in\Ss\\P\subseteq S}}\int_P f(x)d\nu(x)\leq \Lambda\int_S \mathcal M_\Ss(f1_S)(x)d\nu(x).\]
\end{lemma}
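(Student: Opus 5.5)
Throughout, write $\mathcal D(S)=\{P\in\Ss\colon P\subseteq S\}$. The plan is a stopping-time argument on the level sets of $\mathcal M_\Ss(f1_S)$ restricted to $S$, combined with the Carleson packing condition applied to the maximal elements of those level sets.

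\textbf{Step 1: reduction.} Rewrite the left-hand side as
\[
\sum_{P\in\mathcal D(S)} \avg{f}{P}\,\nu(P).
\]
For $P\in\mathcal D(S)$ we have $\avg{f}{P}=\avg{f1_S}{P}$, so we may assume $f$ is supported in $S$. By monotone convergence it suffices to prove the bound for finite subfamilies $\mathcal F\subseteq\mathcal D(S)$, uniformly in $\mathcal F$. Using $\avg{f}{P}=\int_0^\infty 1_{\{\avg{f}{P}>t\}}\,dt$, the sum becomes
\[
\int_0^\infty \sum_{\substack{P\in\mathcal F\\ \avg{f}{P}>t}} \nu(P)\,dt .
\]

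\textbf{Step 2: packing at each level.} Fix $t>0$ and set $\mathcal F_t=\{P\in\mathcal F\colon \avg{f}{P}>t\}$. Since $\mathcal F$ is finite and nested, $\mathcal F_t$ has maximal elements $\{P_j^t\}_j$. These are pairwise disjoint (nested and maximal), and every element of $\mathcal F_t$ lies inside one of them. The $\Lambda$-Carleson condition applied to each $P_j^t$ (formally with $\Omega=P_j^t$, which is what the packing condition gives for $\Omega\in\Ss$) yields
\[
\sum_{P\in\mathcal F_t}\nu(P)\le \sum_j\sum_{\substack{P\in\Ss\\ P\subseteq P_j^t}}\nu(P)\le \Lambda\sum_j\nu(P_j^t)=\Lambda\,\nu\Bigl(\bigcup_j P_j^t\Bigr).
\]
Every point of $P_j^t$ satisfies $\mathcal M_\Ss(f1_S)>t$, and $\bigcup_j P_j^t\subseteq S$. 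Hence
\[
\sum_{P\in\mathcal F_t}\nu(P)\le \Lambda\,\nu\bigl(\{x\in S\colon \mathcal M_\Ss(f1_S)(x)>t\}\bigr).
\]

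\textbf{Step 3: integrate in $t$.} Integrating over $t\in(0,\infty)$ and using the layer-cake formula on $S$ gives
\[
\sum_{P\in\mathcal F}\int_P f\,d\nu\le \Lambda\int_S \mathcal M_\Ss(f1_S)\,d\nu,
\]
and letting $\mathcal F$ increase to $\mathcal D(S)$ finishes the proof.

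The main technical point is Step 2: maximal elements must exist and be disjoint, which is why we pass to finite subfamilies. Ascending chains in $\Ss$ could otherwise be infinite, and the Carleson condition is only directly available for sets of $\Ss$ (or for disjoint unions of them, which we handle by summing). Everything else is the layer-cake formula and Tonelli. Atoms of $\nu$ cause no difficulty, since the argument never uses diffuseness.
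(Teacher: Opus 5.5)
Your proof is correct and is essentially the paper's argument: layer cake in the level parameter, then the observation that every $P\subseteq S$ with $\avg{f}{P}>\lambda$ lies in $E_\lambda=\{x\in S\colon \mathcal M_\Ss(f1_S)(x)>\lambda\}$, then the Carleson packing bound, then integration in $\lambda$. The only difference is that the paper applies the packing condition directly with $\Omega=E_\lambda$, since its definition of $\Lambda$-Carleson tests all $\Omega\subseteq\G$; your passage to the disjoint maximal elements of a finite $\mathcal F_t$ uses the condition only on sets of $\Ss$, which is a harmless and slightly more economical refinement.
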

\begin{proof}
    Let $f\geq0$ and $S\in\Ss$. By the layer-cake formula,
\[
\sum_{\substack{P\in\Ss\\P\subseteq S}}\int_P f(x)d\nu(x)
=
\sum_{\substack{P\in\Ss\\P\subseteq S}}\nu(P)\langle f\rangle_P=
\int_0^\infty
\sum_{\substack{P\in\Ss,\ P\subseteq S\\\langle f\rangle_P>\lambda}}
\nu(P)\,d\lambda.
\]
For $\lambda>0$, set
\[E_\lambda=\{x\in S:\mathcal M_{\Ss}(f1_S)(x)>\lambda\}.\]
Whenever $P\subseteq S$ and $\langle f\rangle_P>\lambda$, we have $P\subseteq E_\lambda$. Since $\Ss$ is $\Lambda$-Carleson,
\[\sum_{\substack{P\in\Ss,\ P\subseteq S\\\langle f\rangle_P>\lambda}}
\nu(P)
\leq
\sum_{\substack{P\in\Ss\\P\subseteq E_\lambda}}\nu(P)
\leq
\Lambda\nu(E_\lambda).\]
Therefore,
\[
\sum_{\substack{P\in\Ss\\P\subseteq S}}\int_P f(x)d\nu(x)
\leq
\Lambda\int_0^\infty\nu(E_\lambda)\,d\lambda=
\Lambda\int_S \mathcal M_{\Ss}(f1_S)(x)d\nu(x).
\]
\end{proof}

\begin{prop}\label{prop: infCarl}
    Let $\nu\in\mathcal C_\Ss$. We have that $\omega\in A_{\infty}^{\Ss}$ if and only if $\omega d\nu\in\mathcal C_\Ss$.
\end{prop}
\begin{proof}
    Assume that $\omega\in A_\infty^\Ss$. Then applying Lemma \ref{lem:CET} with $f=\omega$, one gets
\begin{equation}\label{eq: inf}
    \sum_{\substack{P\in\Ss\\P\subseteq S}}\omega(P)\leq \Lambda\int_S\mathcal M_\Ss(\omega 1_S)(x)d\nu(x)\leq \Lambda[\omega]_{\infty,\Ss}\omega(S),
\end{equation}
so that $\omega\nu\in\mathcal C_\Ss$. On the other hand, assume $\omega\nu\in\mathcal C_\Ss$ with constant $\Lambda'$. For $S\in\Ss$ and $x\in\X$ we have that 
\[\int_S \mathcal M_\Ss(\omega 1_S)(x)d\nu(x)\leq \sum_{\substack{P\in\Ss\\P\subseteq S}}\omega(P)\leq\Lambda'\omega(S),\]
which proves that $\omega\in A_\infty^\Ss$.
\end{proof}

We finish the one-weight discussion with a weak-type estimate. Its proof closely follows those of Theorems 1.1 and 1.2 in \cite{CNSW2025}. We have chosen to give full details for the case $p=1$ to underline the differences that arise when the measure space may contain point masses. 

\begin{prop}\label{prop:weightsparsop}
We have the following estimates:
    \begin{align*}
        \|\mathcal A_{\Ss}\|_{L^1(\omega)\to L^{1,\infty}(\omega)}
        &\lesssim [\omega]_{1,\Ss}\Bigl(\log[\omega]_{\infty,\Ss}+1\Bigr),&&\omega\in A_1^{\Ss};\\
        \|\mathcal A_{\Ss}\|_{L^p(\omega)\to L^{p,\infty}(\omega)}
        &\lesssim [\omega]_{p,\Ss}^{\frac 1p}[\omega]_{\infty,\Ss}^{\frac 1{p'}},&&\omega\in A_p^{\Ss},\qquad 1<p<\infty.
    \end{align*}
    Furthermore, for every $1\leq p<\infty$, if $\mathcal A_\Ss$ is bounded from $L^p(\omega)$ to $L^{p,\infty}(\omega)$, then $\omega\in A_p^\Ss$.
\end{prop}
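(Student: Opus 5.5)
We follow the scheme of \cite{CNSW2025}, replacing the disjoint sparse sets by the almost sparse sets of Lemma \ref{lem: equiv Carl} and the sparse packing by the Carleson condition.

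\emph{Case $p=1$.} Fix $f\geq 0$ and $\lambda>0$, and decompose $\mathcal A_\Ss f=\sum_{k\geq 0}\mathcal A_k f$, where for $k\geq1$ the operator $\mathcal A_k$ keeps only those $S$ with $\avg{f}{S}\in(2^{-k}\lambda,2^{-k+1}\lambda]$, and $\mathcal A_0$ collects the $S$ with $\avg f S>\lambda$. The support of $\mathcal A_0f$ lies in $\{\mathcal M_\Ss f>\lambda\}$. Since $\omega\in A_1^\Ss$ gives $\mathcal M_\Ss:L^1(\omega)\to L^{1,\infty}(\omega)$ with constant $\lesssim[\omega]_{1,\Ss}$, this set has $\omega$-measure at most $C[\omega]_{1,\Ss}\lambda^{-1}\|f\|_{L^1(\omega)}$.

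For $k\geq1$, let $\Ss_k$ be the maximal elements of the relevant family. We estimate $\omega(\{\mathcal A_kf>\lambda/2\})$ in two ways.
\begin{itemize}
\item First, trivially, by $\sum_{S\in\Ss_k}\omega(S)$.
\item Second, by Chebyshev with $\|\mathcal A_kf\|_{L^1(\omega)}=\sum_S\avg f S\,\omega(S)$. By Proposition \ref{prop: infCarl} the measure $\omega\,d\nu$ is Carleson with constant $\lesssim[\omega]_{\infty,\Ss}$, so the sum over the subfamily below each maximal $S\in\Ss_k$ is at most $[\omega]_{\infty,\Ss}\,2^{-k}\lambda\,\omega(S)$.
\end{itemize}
Hence
\[
\omega(\{\mathcal A_kf>\lambda/2\})\lesssim \min(1,2^{-k}[\omega]_{\infty,\Ss})\sum_{S\in\Ss_k}\omega(S).
\]
Since the elements of $\Ss_k$ are disjoint, $\avg f S>2^{-k}\lambda$, and $\omega(S)\leq[\omega]_{1,\Ss}\,\nu(S)\inf_S\omega$, we get
\[
\sum_{S\in\Ss_k}\omega(S)\leq [\omega]_{1,\Ss}\,2^k\lambda^{-1}\|f\|_{L^1(\omega)}.
\]
The point masses cause trouble at this stage. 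Combining the two bounds naively gives $\min(2^k,[\omega]_\infty)$, which is not summable in $k$. So we cannot simply add the estimates for $\{\mathcal A_kf>\lambda/2\}$. Instead we split at $k_0\approx\log_2[\omega]_{\infty,\Ss}$, as in \cite{CNSW2025}.
\begin{itemize}
\item For $k\leq k_0$, we control $\{\sum_{k\leq k_0}\mathcal A_kf>\lambda/2\}\setminus\{\mathcal M_\Ss f>\lambda\}$ through $\|\mathcal A_kf\|_{L^1(\omega)}\lesssim[\omega]_{1,\Ss}\|f\|_{L^1(\omega)}$. This bound uses Lemma \ref{lem: equiv Carl}: the overlap bound \eqref{eq: alm sp} with $E_S$ of proportion $\eta$ lets us bound $\sum_S\avg f S\,\omega(S)$ over one level by $\frac{N}{\eta}[\omega]_{1,\Ss}\int f\omega$, since within level $k$ the averages are comparable. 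Summing the $k_0$ levels gives the logarithm.
\item For the tail $k>k_0$, we use a level-set argument with thresholds $\lambda 2^{-(k-k_0)/2}$ so that the geometric decay $2^{-k}[\omega]_\infty$ beats $2^k$.
\end{itemize}
Arranging these exponents so that the final sum is $\lesssim[\omega]_{1,\Ss}(1+\log[\omega]_{\infty,\Ss})$ is the main obstacle. I would follow the bookkeeping of \cite{CNSW2025} verbatim, checking that every use of disjointness of the $E_S$ only costs the factor $N/\eta$.

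\emph{Case $1<p<\infty$.} We use duality for weak $L^p$. For a set $F$ with $\omega(F)<\infty$ we must bound $\int_F\mathcal A_\Ss f\,\omega\,d\nu=\sum_S\avg f S\,\omega(F\cap S)$ by $\|f\|_{L^p(\omega)}\,\omega(F)^{1/p'}$ times the claimed constant. Write
\[
\avg f S=\avg{f\omega^{1/p}\cdot\omega^{-1/p}}{S}
\]
and apply Hölder with $\sigma=\omega^{1-p'}$. Then $\avg{\omega}{S}^{1/p}\avg{\sigma}{S}^{1/p'}\leq[\omega]_{p,\Ss}^{1/p}$, which leaves sums controlled on one side by $\mathcal M_\Ss$ in $L^p(\omega)$ via the Carleson embedding. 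On the other side, Lemma \ref{lem:CET} and Proposition \ref{prop: infCarl} applied to $\omega$ produce the factor $[\omega]_{\infty,\Ss}^{1/p'}$.

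\emph{Necessity.} Test on $f=\sigma1_S$ when $p>1$, and on $f=1_{E}$ with $E\subseteq S$ a set where $\omega$ is close to its essential infimum when $p=1$. In both cases $\mathcal A_\Ss f\geq\avg f S$ on $S$. The weak-type bound at level $\avg f S$ then gives
\[
\omega(S)\,\avg{\sigma}{S}^p\leq C^p\,\sigma(S),
\]
which is $[\omega]_{p,\Ss}\leq C^p$. Atoms cause no issue here, because $E$ may be taken to be an atom if needed.
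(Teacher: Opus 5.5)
\textbf{Verdict: the $p=1$ tail estimate, which is where the logarithm comes from, has a genuine gap.} The rest mostly holds up. Your necessity argument is fine. Testing with $1_E$ for $p=1$ is even slightly more general than the paper's $f=\delta_x\nu(x)^{-1}$, which needs $x$ to be an atom. The levels $k\leq k_0$ can be handled as you say, with one proviso: the sets $E(S)$ must be built from the level-$k$ subfamily itself, with $\eta>1/2$, so that $\int_S f\,d\nu\lesssim\int_{E(S)}f\,d\nu$. This is what the paper does with $F_N(S)$ and $E(S)$.

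The mechanism you indicate for $k>k_0$ cannot work. Write $\mathcal L_k$ for the level-$k$ family and $\Ss_k$ for its maximal elements. Everything you have on level $k$ is the support bound $\sum_{S_0\in\Ss_k}\omega(S_0)\leq 2^k\lambda^{-1}[\omega]_{1,\Ss}\|f\|_{L^1(\omega)}$ and first-moment bounds such as $\|\mathcal A_kf\|_{L^1(\omega)}\lesssim 2^{-k}\lambda[\omega]_{\infty,\Ss}\sum_{S_0\in\Ss_k}\omega(S_0)$. Combining them, the factors $2^{-k}$ and $2^k$ cancel exactly; the ``decay'' $2^{-k}[\omega]_{\infty,\Ss}$ never beats $2^k$. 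Any Chebyshev estimate at thresholds $\theta_k\lambda$ with $\sum_k\theta_k\leq 1/2$ costs a factor $\theta_k^{-1}\geq 1$ per level. Lowering the thresholds to $\lambda 2^{-(k-k_0)/2}$ therefore only makes the tail diverge faster. This divergence has nothing to do with atoms; it is already present for Lebesgue measure. Deferring to the bookkeeping of \cite{CNSW2025} does not close it, because what is missing is an estimate, not bookkeeping.

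\textbf{The missing ingredient is exponential decay of the counting function in $\omega$-measure.} Let $\Lambda_\omega\lesssim[\omega]_{\infty,\Ss}$ be the Carleson constant of $\omega\,d\nu$ (Proposition~\ref{prop: infCarl}). Fix $S_0\in\Ss_k$ and set $n(x)=\#\{S\in\mathcal L_k\colon x\in S\subseteq S_0\}$.
\begin{itemize}
\item The Chebyshev step in the proof of Lemma~\ref{lem: equiv Carl} gives $\omega(\{n>M\})\leq\Lambda_\omega(M+1)^{-1}\omega(S_0)$.
\item The set $\{n>M\}$ is the disjoint union of the members of depth $M+1$, on which the count restarts. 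Iterating with $M\approx 2\Lambda_\omega$ gives $\omega(\{n>t\})\lesssim 2^{-t/(3\Lambda_\omega)}\omega(S_0)$. This is purely combinatorial, so atoms are harmless.
\item Since $\mathcal A_kf\leq 2^{1-k}\lambda\sum_{S\in\mathcal L_k}1_S$, take thresholds $c2^{-(k-k_0)/2}\lambda$ with $2^{k_0}\approx\Lambda_\omega^2$. Then $\omega(\{\mathcal A_kf>c2^{-(k-k_0)/2}\lambda\})\lesssim 2^k\exp(-c'2^{k/2})[\omega]_{1,\Ss}\lambda^{-1}\|f\|_{L^1(\omega)}$.
\end{itemize}
This is summable in $k$, while $k_0\lesssim 1+\log[\omega]_{\infty,\Ss}$, and that completes your distributional route.

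\textbf{Comparison with the paper.} The paper avoids exponential decay altogether. It dualizes $L^{1,\infty}(\omega)$, discards $\{\mathcal M_\Ss f>\gamma\}$ with $\gamma=2[\omega]_{1,\Ss}/\omega(E)$, and does a layer cake in the density $\omega(E'\cap S)/\omega(S)>\lambda$. This makes the tops satisfy $\sum_{S_0}\omega(S_0)\leq\lambda^{-1}\omega(E)$. It then applies H\"older with exponent $r$, so the packing $\sum_{S\subseteq S_0}\omega(S)\lesssim[\omega]_{\infty,\Ss}\omega(S_0)$ enters only as $[\omega]_{\infty,\Ss}^{1/r'}=O(1)$ for $r'\approx\log[\omega]_{\infty,\Ss}$. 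The price paid is the factor $rr'$, which is the logarithm.

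\textbf{The case $p>1$.} Your paragraph is too thin to check, and as written it does not close. After your H\"older step you are left with $\sum_S\bigl(\int_S f^p\omega\,d\nu/\omega(S)\bigr)^{1/p}\omega(F\cap S)$. This is an $L^1$-type average of $f^p$, which no Carleson embedding in $L^p(\omega)$ controls, so a genuinely weak-type stopping argument is needed there too. The paper itself only refers this case to \cite{CNSW2025}.
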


\begin{proof}
    Take $f\in L^1(\omega)$ normalized. By the following classical characterization of the $L^{1,\infty}$ norm, it is sufficient to prove that 
    \[\|\mathcal A_{\Ss}f\|_{L^{1,\infty}(\omega)}\approx \sup_{\substack{E\subseteq \G\\0<\omega(E)<\infty}}\inf_{\substack{E'\subseteq E\\ \omega(E')\geq \frac 12 \omega(E)}}\langle \mathcal A_{\Ss}f,\omega 1_{E'}\rangle \lesssim [\omega]_{1,\Ss}(\log[\omega]_{\infty,\Ss}+1).\]
    Let $E\subseteq \G$ be such that $0<\omega(E)<\infty$ and set $\gamma=2[\omega]_{1,\Ss}\omega(E)^{-1}$. We set $E'=\{x\in E\colon \mathcal M_{\Ss}f(x)\leq \gamma\}$. By the fact that $\|\mathcal M_{\Ss}f\|_{L^{1,\infty}(\omega)}\leq[\omega]_{1,\Ss}\|f\|_{L^1(\omega)}$, we have that $\omega(E')\geq \frac 12\omega(E)$. Clearly, it is sufficient to consider the averages of $f$ on sets of the family $\Ss_+=\{S\in\Ss\colon \avg fS\leq \gamma\}$, otherwise they do not intersect $E'$. For each $\lambda\in (0,1)$, let $\Ss_\lambda=\{S\in \Ss_+\colon \omega(E'\cap S)>\lambda\omega(S)\}$ and $\Ss_\lambda^*$ be the subfamily of $\Ss_\lambda$ made with the maximal sets with respect to inclusion. We have 
    \[\sum_{S\in\Ss_+}\avg fS\avg{\omega 1_{E'}}{S}\nu(S)=\sum_{S\in\Ss_+}\avg fS\omega(S)\int_0^{\frac{\omega(E'\cap S)}{\omega(S)}}d\lambda=\int_0^1\sum_{S_0\in\Ss_\lambda^*}\sum_{\substack{S\in\Ss_\lambda\\S\subseteq S_0}}\avg fS \omega(S)d\lambda.\]
    Now fix $\lambda\in (0,1)$ and $S_0\in\Ss_\lambda^*$, by Hölder inequality for a certain $r>1$ and~\eqref{eq: inf}, we have
    \[ \sum_{\substack{S\in\Ss_\lambda\\S\subseteq S_0}} \avg fS \omega(S)
    \leq
    \Bigg( \sum_{\substack{S\in\Ss_\lambda\\S\subseteq S_0}} \avg fS^r \omega(S)\Bigg)^{\frac 1r} \Bigg(\sum_{\substack{S\in\Ss_\lambda\\S\subseteq S_0}} \omega(S)\Bigg)^{\frac 1{r'}}
    \lesssim [\omega]_{\infty,\Ss}^{\frac 1{r'}}\omega(S_0)^{\frac 1{r'}}\Bigg(\sum_{\substack{S\in\Ss_\lambda\\S\subseteq S_0}} \avg fS^r \omega(S)\Bigg)^{\frac 1r} .\]
    Let $C\in (1,2)$ we decompose 
    \[ \Ss_\lambda=\bigsqcup_{k=0}^{\infty}\mathcal E_k,\qquad \mathcal E_k:=\{S\in\Ss_\lambda\colon C^{-(k+1)}\gamma<\avg fS\leq C^{-k}\gamma\}. \]
    Let $N\in\mathbb N$ be as in Lemma~\ref{lem: equiv Carl} for $\eta=1/2$. For each $S\in\mathcal E_k$, we consider 
    \[F_N(S)=\{x\in S\colon \#\Ss_{k,S}(x)>N\},\qquad\text{where}\qquad \Ss_{k,S}(x)=\{P\in\mathcal E_k\colon x\in P\subseteq S \}.\]
    Clearly, we have that $F_N(S)$ is a union of sets of $\mathcal E_k$. 
    We set
    \[ E(S)=S\setminus\bigcup_{P\in\mathcal F_N(S)}P,\qquad\text{where}\qquad\mathcal F_N(S)=\{P\in\mathcal E_k\colon P\subseteq F_N(S)\text{ maximal}\}.\]
    By using the fact that $\nu(F_N(S))\leq \frac 12\nu(S)$ (similarly to proof of Lemma~\ref{lem: equiv Carl}), we have
    \[\sum_{P\in\mathcal F_N(S)}\int_Pf(x)d\nu(x)\leq C^{-k}\gamma \sum_{P\in\mathcal F_N(S)}\nu(P)\leq \frac 12C^{-k}\gamma \nu(S)\leq  \frac 12C \int_S f(x)d\nu(x),
    \]
    so that, by $C<2$
    \[\int_Sf(x)d\nu(x)\lesssim \int_{E(S)}f(x)d\nu(x). \]
    Furthermore, we have that the family $\{E(S)\}_{S\in\mathcal E_k}$ is pairwise almost disjoint. Indeed, for each $x\in E$, if $\#\{S\in\mathcal E_k\colon x\in E(S)\}>N$, then we have a contradiction of the definition of $\mathcal{F}_N(S)$. We now use all these elements to show that
    \begin{align*}
        \sum_{\substack{S\in\Ss_\lambda\\S\subseteq S_0}}\avg fS\omega(S)\lesssim &[\omega]_{\infty,\Ss}^{\frac 1{r'}}\omega(S_0)^{\frac 1{r'}}\Bigg(\sum_{k=0}^{\infty}\sum_{\substack{S\in\mathcal E_k\\S\subseteq S_0}} \avg fS^r \omega(S)\Bigg)^{\frac 1r}\\
        \leq &\gamma^{\frac 1{r'}}[\omega]_{\infty,\Ss}^{\frac 1{r'}}\omega(S_0)^{\frac 1{r'}}\Bigg(\sum_{k=0}^{\infty}C^{-k(r-1)} \sum_{\substack{S\in\mathcal E_k\\S\subseteq S_0}} \avg fS \omega(S)\Bigg)^{\frac 1r}\\
        \lesssim&\gamma^{\frac 1{r'}}[\omega]_{\infty,\Ss}^{\frac 1{r'}}\omega(S_0)^{\frac 1{r'}}\Bigg(\sum_{k=0}^{\infty}C^{-k(r-1)} \sum_{\substack{S\in\mathcal E_k\\S\subseteq S_0}} \int_{E(S)}f(x)\mathcal M_{\Ss}(\omega 1_{S_0})(x)d\nu(x)\Bigg)^{\frac 1r}\\
        \lesssim&r'\gamma ^{\frac 1{r'}}[\omega]_{\infty,\Ss}^{\frac 1{r'}}\omega(S_0)^{\frac 1{r'}}\Bigg(\int_{S_0}f(x)\mathcal M_{\Ss}(\omega 1_{S_0})(x)d\nu(x)\Bigg)^{\frac 1r},
    \end{align*}
    where we used that $(1-C^{1-r})^{-\frac 1r}\approx r'$ and that the $\{E(S)\}_{S\in\mathcal E_k}$ are pairwise almost disjoint. In particular, we stress that the constant arising in the control of the sum of $1_{E(S)}$, $S\in\mathcal E_k$, does not depend on the measure $\omega$, but on the geometry of $\Ss$. Now we conclude by passing to the sum over all $S_0\in \Ss_\lambda^*$, and by Hölder, we control
     \begin{align*}
         \langle \mathcal A_{\Ss}f,\omega 1_{E'}\rangle\lesssim&r'\gamma ^{\frac 1{r'}}[\omega]_{\infty,\Ss}^{\frac 1{r'}}\int_0^1
        \Bigg( \sum_{S_0\in\Ss_\lambda^*}\omega(S_0)\Bigg)^{\frac 1{r'}}\Bigg(\sum_{S_0\in\Ss_\lambda^*}\int_{S_0}f(x)\mathcal M_{\Ss}(\omega 1_{S_0})(x)d\nu(x)\Bigg)^{\frac 1r}d\lambda\\
        \leq&r'\gamma ^{\frac 1{r'}}[\omega]_{\infty,\Ss}^{\frac 1{r'}}
        \|f\|^{\frac 1r}_{L^1(\mathcal M_{\Ss}\omega)} \int_0^1 \lambda^{-\frac 1{r'}} \Bigg(\sum_{S_0\in\Ss_\lambda^*}\omega(E'\cap S_0)\Bigg)^{\frac 1{r'}}d\lambda\\
        \leq&rr'\gamma ^{\frac 1{r'}}[\omega]_{\infty,\Ss}^{\frac 1{r'}}\omega(E)^\frac{1}{r'}\|f\|^{\frac 1r}_{L^1(\mathcal M_{\Ss}\omega)}
        \leq 2^{\frac 1{r'}} rr'[\omega]_{\infty,\Ss}^{\frac 1{r'}}[\omega]_{1,\Ss}.
    \end{align*}
    We conclude by taking $r'=\log[\omega]_{\infty,\Ss}+2$.

    For the converse implication, assume first that $1<p<\infty$ and fix $S\in\Ss$. By testing the weak-type estimate with $f=\omega^{-\frac1{p-1}}1_S$, we obtain
\[
\left\langle\omega^{-\frac1{p-1}}\right\rangle_S^p
\omega(S)
\leq
\|\mathcal A_\Ss f\|^p_{L^{p,\infty}(\omega)}
\lesssim
\|f\|_{L^p(\omega)}^p
=
\left\langle\omega^{-\frac1{p-1}}\right\rangle_S \nu(S).
\]
Hence $\langle\omega\rangle_S
\left\langle\omega^{-\frac1{p-1}}\right\rangle_S^{p-1}
\lesssim1$, uniformly in $S\in\Ss$, and therefore $\omega\in A_p^\Ss$.
For $p=1$, fix $S\in\Ss$ and $x\in S$, and test the weak-type estimate with $f=\delta_x\nu(x)^{-1}$. Since $\mathcal A_\Ss f\geq\nu(S)^{-1}1_S$ and
$\|f\|_{L^1(\omega)}=\omega(x)$, we obtain
$\avg\omega S\lesssim\omega(x)$. Taking the supremum of $\omega(x)^{-1}$ over $x\in S$ gives
$\avg\omega S\|\omega^{-1}1_S\|_\infty\lesssim 1$,
and hence $\omega\in A_1^\Ss$.
\end{proof}

We next turn to two-weight inequalities. We are interested in sufficient conditions on a pair of weights $\sigma,\omega$ that guarantee that $\mathcal A_{\Ss}(\cdot\sigma)\colon L^p(\sigma)\to L^p(\omega)$ is bounded. Our starting point is a classical result in a version that is more convenient for us.
\begin{definition}
    Given $1<p<\infty$, we define the mixed class of weights
\[A_p^{\Ss,2}(\nu)=\Big\{(\omega,\sigma)\in A_\infty^\Ss(\nu)\times A_\infty^\Ss(\nu)\colon [\omega,\sigma]_{p,\Ss}:=\sup_{Q\in \Ss}\avg{\omega}{Q}\avg{\sigma}{Q}^{p-1}<+\infty\Big\}.\]
\end{definition}
\begin{prop}[\cite{Hytonen2012,LSUT2009,FH2018}]\label{prop: 2pesi}
Let $\nu\in\mathcal C_{\Ss}$. Let $1<p<\infty$ and $(\omega,\sigma)\in A_p^{\Ss,2}(\nu)$. We have that
\begin{align*}
    \| \mathcal{A}_{\Ss}(\cdot\,\sigma)\| _{L^{p}(\sigma)\to L^{p}(\omega)}
    \lesssim&[\omega,\sigma]_{p,\Ss}^{\frac 1p}\big([\sigma]_{\infty,\Ss}^{\frac 1p}+[\omega]_{\infty,\Ss}^{\frac 1{p'}}\big).
\end{align*}
    Furthermore, if $\omega,\sigma\in A_\infty^\Ss(\nu)$ are such that $\mathcal A_\Ss(\cdot\,\sigma)$ is bounded from $L^p(\sigma)$ to $L^p(\omega)$, then $(\omega,\sigma)\in A^{\Ss,2}_{p}(\nu)$.
\end{prop}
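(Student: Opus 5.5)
We will prove the upper bound by duality together with the sparse-family technique behind Proposition \ref{prop:weightsparsop}, and the converse by testing. Fix $f\ge0$ in $L^p(\sigma)$ and $g\ge0$ in $L^{p'}(\omega)$. Then
\[
\langle \As(f\sigma), g\omega\rangle=\sum_{S\in\Ss}\avg{f\sigma}{S}\avg{g\omega}{S}\nu(S)
=\sum_{S\in\Ss}\frac{\sigma(S)\omega(S)}{\nu(S)}\,\avg{f}{S}^{\sigma}\avg{g}{S}^{\omega},
\]
where $\avg{f}{S}^\sigma=\sigma(S)^{-1}\int_S f\sigma\,d\nu$, and analogously for $\avg{g}{S}^\omega$.

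Following the Hytönen--Lacey/Lacey--Sawyer--Uriarte-Tuero strategy, we split the sum into two parts. On the first, $\avg{f}{S}^\sigma\sigma(S)^{1/p'}\cdots$ dominates in the appropriate sense, i.e. we compare $(\avg{f}{S}^{\sigma})^p\sigma(S)$ with $(\avg{g}{S}^{\omega})^{p'}\omega(S)$; the second part is symmetric. Alternatively, and more simply, we reduce to testing conditions. By the nested structure, Sawyer-type testing for positive dyadic operators (the characterization of Lacey--Sawyer--Uriarte-Tuero, whose proof uses only the nested structure of $\Ss$ and the boundedness of $\mathcal M_\Ss$ on $L^p(\sigma)$ with constant independent of $\sigma$) gives
\[
\|\As(\cdot\sigma)\|\lesssim \sup_{S}\sigma(S)^{-1/p}\|1_S\As(1_S\sigma)\|_{L^p(\omega)}+\sup_S\omega(S)^{-1/p'}\|1_S\As(1_S\omega)\|_{L^{p'}(\sigma)}.
\]
We then estimate each testing constant directly. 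We have $1_S\As(1_S\sigma)\le \sum_{P\subseteq S}\avg{\sigma}{P}1_P+\avg{\sigma}{S}\sum_{P\supsetneq S}\frac{\nu(S)}{\nu(P)}1_S$. The second term is bounded using Proposition \ref{prop: somma sect}, giving $\Lambda^2\avg{\sigma}{S}1_S$, whose $L^p(\omega)$ norm to the $p$-th power is $\lesssim\avg{\sigma}{S}^p\omega(S)\le[\omega,\sigma]_{p,\Ss}\sigma(S)$.

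The first term is the main obstacle. We use the $L^p(\omega)$ norm of the local sum $\sum_{P\subseteq S}\avg\sigma P1_P$. Expanding via the standard estimate $\|\sum a_P1_P\|_{L^p(\omega)}^p\lesssim_p\sum_P a_P\omega(P)\bigl(\sup_{P}\tfrac{1}{\omega(P)}\sum_{Q\subseteq P}a_Q\omega(Q)\bigr)^{p-1}$ (valid for nested families, a dyadic-type Carleson sum lemma), we bound $\sum_{Q\subseteq P}\avg\sigma Q\omega(Q)\le [\omega,\sigma]^{1/p}\sum_{Q\subseteq P}\sigma(Q)^{1/p'}\omega(Q)^{1/p}$. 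We then use Hölder and the Carleson property of $\sigma\,d\nu$ and $\omega\,d\nu$ (Proposition \ref{prop: infCarl} and \eqref{eq: inf}, constants $\Lambda[\sigma]_{\infty,\Ss}$, $\Lambda[\omega]_{\infty,\Ss}$) to get $\lesssim[\omega,\sigma]^{1/p}[\sigma]_\infty^{1/p'}[\omega]_\infty^{1/p}\sigma(P)^{1/p'}\omega(P)^{1/p}$. Getting the sharp exponent stated, $[\omega,\sigma]^{1/p}([\sigma]_\infty^{1/p}+[\omega]_\infty^{1/p'})$, is where we follow Hytönen's parallel-stopping argument more carefully. We would first try the weaker product bound and then refine via a stopping-time construction on $\sigma$-averages, using almost sparsity (Lemma \ref{lem: equiv Carl}) with overlap $N$ to replace disjointness. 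The overlap only costs constants depending on $\Ss$'s geometry, exactly as in Proposition \ref{prop:weightsparsop}.

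For the converse, we test boundedness with $f=1_S$. This gives $\|\As(1_S\sigma)\|_{L^p(\omega)}^p\lesssim\sigma(S)$, and since $\As(1_S\sigma)\ge\avg\sigma S1_S$, we obtain $\avg\sigma S^p\omega(S)\lesssim\sigma(S)$, i.e. $\avg\omega S\avg\sigma S^{p-1}\lesssim1$. Together with the assumption $\omega,\sigma\in A_\infty^\Ss$, this gives $(\omega,\sigma)\in A_p^{\Ss,2}(\nu)$.
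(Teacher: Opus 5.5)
Your converse is correct: testing with $f=1_S$ and using $\As(1_S\sigma)\ge\avg{\sigma}{S}1_S$ gives the joint $A_p$ condition. Your handling of the sets $P\supsetneq S$ in $1_S\As(1_S\sigma)$ via Proposition~\ref{prop: somma sect} is also correct. The paper does not reprove this proposition; it quotes it from \cite{Hytonen2012,LSUT2009,FH2018}. Your attempt to reprove the upper bound has a genuine gap in the local testing term, which is where all the content of the sharp estimate lies.

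First, the exponents are swapped. The correct bound is $\avg{\sigma}{Q}\,\omega(Q)=\sigma(Q)\omega(Q)/\nu(Q)\le[\omega,\sigma]_{p,\Ss}^{1/p}\sigma(Q)^{1/p}\omega(Q)^{1/p'}$.

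More seriously, the Carleson-sum lemma you invoke needs the uniform constant $\sup_P\omega(P)^{-1}\sum_{Q\subseteq P}\avg{\sigma}{Q}\omega(Q)$. Taking $Q=P$ shows this is at least $\sup_{P\subseteq S}\avg{\sigma}{P}$. The lemma then returns a right-hand side of size at least $\sigma(S)\,\avg{\omega}{S}\sup_{P\subseteq S}\avg{\sigma}{P}^{p-1}$. This mixed quantity is in general unbounded, even for one-weight $A_p$ pairs, because $[\omega,\sigma]_{p,\Ss}$ only controls $\avg{\omega}{P}\avg{\sigma}{P}^{p-1}$. Correspondingly, your $P$-dependent bound divided by $\omega(P)$ leaves a factor $(\sigma(P)/\omega(P))^{1/p}$ with no uniform bound.

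Any repair of this route that Hölder-splits $\sigma(Q)^{1/p}\omega(Q)^{1/p'}$ spends the packing of $\omega\,d\nu$ inside the $\sigma$-testing constant. So it can only produce product-type bounds such as $[\omega,\sigma]_{p,\Ss}^{1/p}[\sigma]_{\infty,\Ss}^{1/p}[\omega]_{\infty,\Ss}^{1/p'}$. That product you can get directly, with no testing theorem, from your duality identity, the bound above, Hölder over $S\in\Ss$, and the layer-cake argument of Lemma~\ref{lem:CET} run with $\sigma\,d\nu$ and $\omega\,d\nu$ together with \eqref{eq: inf}. The statement asserts the sum, and the difference is not cosmetic. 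With $\sigma=\omega^{-1/(p-1)}$ the product gives only $[\omega]_{p,\Ss}^{1+1/(p(p-1))}$, not the $[\omega]_{p,\Ss}^{\max\{1,1/(p-1)\}}$ derived right after the proposition.

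The missing ingredient is the sharp testing bound with a single $A_\infty$ constant, $\|1_S\As(1_S\sigma)\|_{L^p(\omega)}^p\lesssim[\omega,\sigma]_{p,\Ss}[\sigma]_{\infty,\Ss}\,\sigma(S)$, together with its dual (exponent $p'$, with $[\omega]_{\infty,\Ss}$ and $\omega(S)$). The point is to keep $\avg{\omega}{Q}\avg{\sigma}{Q}^{p-1}$ together and spend the packing of $\nu$ rather than that of $\omega\,d\nu$. For $1<p\le 2$ this is short. Apply $(\sum_k b_k)^p\le p\sum_k b_k(\sum_{j\le k}b_j)^{p-1}$ along the chain of sets containing $x$ (smaller sets first), and use the subadditivity of $t\mapsto t^{p-1}$:
\begin{align*}
\int\Bigl(\sum_{P\in\Ss,\,P\subseteq S}\avg{\sigma}{P}1_P\Bigr)^p\omega\,d\nu
&\le p\sum_{P\in\Ss,\,P\subseteq S}\avg{\sigma}{P}\sum_{Q\in\Ss,\,Q\subseteq P}\avg{\sigma}{Q}^{p-1}\avg{\omega}{Q}\nu(Q)\\
&\le p\Lambda[\omega,\sigma]_{p,\Ss}\sum_{P\in\Ss,\,P\subseteq S}\sigma(P)\le p\Lambda^2[\omega,\sigma]_{p,\Ss}[\sigma]_{\infty,\Ss}\,\sigma(S).
\end{align*}
So at $p=2$ both testing constants are handled this way and your plan closes.

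For $p\neq 2$, however, one of the two testing conditions has exponent larger than $2$, where subadditivity is unavailable. This is the genuinely nontrivial step in the cited literature. It is handled there, for example, by splitting $\Ss$ into layers where $\avg{\omega}{S}\avg{\sigma}{S}^{p-1}\approx 2^{-a}[\omega,\sigma]_{p,\Ss}$ and running a stopping-time argument in each layer. Your proposal only announces this step. As written, it establishes the converse and at best the product bound, not the stated estimate.
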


In particular, it follows that, if $\omega\in A_p^\Ss(\nu)$ then $\As$ is bounded on $L^p(\omega)$, with
    \begin{equation*}
        \|\As\|_{L^p(\omega)\to L^p(\omega)}
        \lesssim [\omega]_{p,\Ss}^{\frac 1p}\big([\omega^{1-p'}]_{\infty,\Ss}^{\frac 1p}+[\omega]_{\infty,\Ss}^{\frac 1{p'}}\big)\leq[\omega]_{p,\Ss}^{\max\{1,\frac{1}{p-1}\}},
    \end{equation*}
by setting $\sigma=\omega^{-\frac 1{p-1}}$ and by the fact that $[\omega^{-\frac 1{p-1}}]_{\infty,\Ss}\lesssim [\omega^{-\frac 1{p-1}}]_{p',\Ss}=[\omega]_{p,\Ss}^{\frac 1{p-1}}$, as well as $[\omega]_{\infty,\Ss}\lesssim[\omega]_{p,\Ss}$.

We end this section with the extremely simple sparse domination principle that will be used in the sequel. Assume $T$ is an integral operator with kernel $K_T$, so that it admits the representation
$$
Tf(x) = \int K_T(x,y) f(y) \; d\nu(y),
$$
in a suitable sense. Such suitable sense can be a delicate issue if $\G$ has a complicated structure, but in our discrete applications this will not be a problem and the interpretation of the equality above will be clear. Denote by $K_{\As}$ the kernel of the sparse operator $\As$, that is,
$$
K_{\As}(x,y)= \sum_{x,y \in S \in \Ss} \frac{1}{\nu(S)}.
$$
Then it is obvious that
\begin{equation}\label{eq:condKernel}
|K_T(x,y)| \lesssim K_{\As}(x,y), \quad K_T(x,y) \sim K_{\As}(x,y),
\end{equation}
(with $K_T$ positive in the second comparison above) respectively imply
\begin{equation} \label{eq:conclusionKernel}
|Tf(x)| \lesssim \As f(x), \quad Tf(x) \sim \As f(x).   
\end{equation}

    
\section{Integral operators on trees} \label{sec:sec2}
In this section we provide pointwise sparse domination results from above and below for a class of integral operators on a tree $\X$ whose kernels satisfy certain size conditions. We subsequently apply these results to particular settings in which such conditions arise naturally. A tree $\X$ is a connected discrete manifold with no cycles. Formally, there is a family $\mathfrak E\subseteq \{\{x,y\}\in\X\times\X\colon x\neq y\}$ of couples, called edges, and we say that $x,y\in \X$ are neighbors if $\{x,y\}\in \mathfrak E$ and we write $x\sim y$.  Considering two points, here called vertices, there exists a unique geodesic -- a finite path of connected vertices with no repetitions -- joining them. The tree is endowed with the metric $d$ that counts the edges in the geodesic joining the two vertices. We denote by $B(x,N)=\{y\in \X\colon d(x,y)\leq N\}$ and $S(x,N)=\{y\in \X\colon d(x,y)= N\}$. 
The branching function of $\X$ is the function $q\colon \X\to \mathbb N$ defined by $q(x)=\#\{y\in \X\colon x\sim y\}-1$. We shall always assume  $\X$ to be locally finite -- i.e. $q(x)<+\infty$ for every $x\in \X$ -- and, to avoid trivial cases, that $q\geq 2$.

We say that two geodesic rays -- i.e. infinite geodesic paths -- $\{x_k\}_{k\in \N},\{y_j\}_{j\in \N}\subseteq \X$ are equivalent if they eventually coincide, up to a shift of their parametrizations. In other words, we introduce the following equivalence relation: $\{x_k\}_{k\in \N}\sim\{y_j\}_{j\in \N}$ if and only if there exists $N\in\mathbb N$ and $m\in \mathbb Z$ such that $x_k=y_{k+m}$ for $k\geq N$. The boundary of $\X$ is the quotient set $\partial \X=G(\X)/\sim$ of the set of  geodesic rays $G(\X)$ with the previous relation. Fix a point at infinity $\xi\in \partial \X$. For $x\in \X$, we denote by $[x,\xi)$ the element in $G(\X)$ that starts at $x$ and is equivalent to $\xi$. Every vertex $x\in \X$ has a unique neighbor $x^{(1)}\in\X$, called predecessor, such that $[x^{(1)},\xi)\subseteq [x,\xi)$, all the other neighbors are called successors of $x$: $s(x)=\{y\in \X\colon y\sim x,[x,\xi)\subseteq [y,\xi)\}$. The notion of predecessor can be iterated by defining $x^{(k+1)}=(x^{(k)})^{(1)}$, $k\geq 1$. In this way, by putting $x^{(0)}=x$, we can say that $[x,\xi)=\{x^{(k)}\}_{k\in\N}$. Finally, given two vertices $x,y\in \X$, the confluent of $x$ and $y$ is the unique vertex $x\wedge y$ such that $[x,\xi)\cap [y,\xi)=[x\wedge y,\xi)$. 

Given $x\in \X$, we call sector of $x$ the (infinite) portion of the tree that is reached from $\xi$ passing through $x$, namely
        \[ T_x=\{y\in\X\colon [x,\xi)\subseteq [y,\xi)\}.\]
        We set $\T=\{T_x\colon x\in \X\}$. Notice that, from the definition of $E_Q$ we provide in the proof of Lemma~\ref{lem: equiv Carl}, we have
        $E_{T_x}=T_x\cap B(x,N-1)$.
        This means that a measure lies in $\mathcal C_{\T}$ whenever the measure of a sector is mainly distributed on a triangle of uniform height. We call pruned sector of $x$ the set
        \[U_x=T_x\setminus\{x\},\]
        and we denote the family of all pruned sectors by $\U$. We also use the notation $\Qp=U_x$ when $Q=T_x\in\T$, as well as $x_Q\in \X$ if $Q=T_x\in\T$ and $x_R\in X$ if $R=U_x\in\U$. As before, it is easy to check that $E_{U_x}=U_x\cap B(x,N)$.
The families $\T$ and $\U$ are clearly nested. Hence, we call predecessor of $Q\in\T$ the smaller set $Q^{(1)}\in \T$ that properly contains $Q$. In other words, if $Q=T_x$, then $Q^{(1)}=T_{x^{(1)}}$. Hence, as for vertices, we can iterate the definition, getting $Q^{(k+1)}=(Q^{(k)})^{(1)}$, $k\geq 1$. Similarly, we define the predecessor for the family $\U$.

We set $\mathcal Q=\T\cup\U$. $\mathcal{Q}$ is still nested, by the definitions of $\T$ and $\U$.  Clearly, we have that $\mathcal C_{\mathcal Q}=\mathcal C_\T\cap \mathcal C_\U$, and we claim that
$ \mathcal C_{\T}=\mathcal C_{\U}$. 
Indeed, given $\nu\in\mathcal C_{\T}$ and $ R\in \U$, we have
\[\sum_{\substack{ S\in\U\\  S\subseteq  R}}\nu(S)=\sum_{\substack{ Q\in\T\\ Q\subseteq  R}}\nu(Q)=\sum_{y\in s(x_R)}\sum_{\substack{ Q\in\T\\ Q\subseteq T_y}}\nu(Q)\leq \Lambda \sum_{\substack{P\in\T \\ P^{(1)}=R}}\nu (P)=\Lambda\nu(R). \]
On the other hand, if $\nu \in\mathcal C_{\U}$ and $R\in\T$, then
\[\sum_{\substack{S\in \T\\ S\subseteq R}}\nu(S)=\nu(R)+\sum_{\substack{S\in \T\\ S\subsetneq R}}\nu(S)=\nu(R)+\sum_{\substack{  P\in \U\\   P\subseteq   R}}\nu(  P)\leq \nu(R)+\Lambda\nu(  R)\leq (\Lambda+1)\nu(R). \]
It is then straightforward to conclude that $\mathcal C_{\Q}=\mathcal C_{\T}=\mathcal C_{\U}$.

The family of sectors can be regarded as the discrete counterpart of Carleson boxes in the disk and in the upper half-plane. Both are naturally viewed as tents over subsets of the boundary. In the upper half-plane, the Carleson box $Q_I=I\times(0,|I|]$ is the tent over a boundary interval $I$, and its measure with respect to $y^{\alpha-2}dxdy$, $\alpha>1$, is proportional to $|I|^\alpha$. Similarly, on a $q$-homogeneous tree $X$, if $\mu_\alpha(x)=q^{-\alpha|x|}$, $x\in \X$, and $|\cdot|$ denotes the canonical boundary measure on $\partial \X$, then $\mu_\alpha(T_x)=c_\alpha |\partial T_x|^\alpha$, for some $c_\alpha>0$. Thus, sectors reproduce both the boundary geometry and the natural scaling of Carleson boxes. Likewise, pruned sectors in $\U$ correspond to the lower part of a Carleson box, since in our discrete setting the upper part reduces to the generator of the sector.

In what follows we shall use $\T$ and $\U$ as the natural discrete geometries for two Bergman-type projectors arising in different tree settings. Indeed, in the continuous setting, Carleson boxes arise naturally in the study of the Bergman projector; see, for instance,~\cite{PR2013}. In the same spirit, the classes $A_p^\T$ correspond to the classical Békollé-Bonami classes~\cite{BB1978}, and inspired by this we shall write $B_p^\T=A_p^\T$.  We also set $B_p^\U=A_p^\U$, $B_p^\Q=A_p^\Q$ and $B_p^{\Q,2}=A_p^{\Q,2}$.

\

\

\noindent\textbf{Horocyclic trees and Bergman-type measures.}  
We first work in the extension of \cite{DMMR2025} to nonhomogeneous trees -- similar to the one from \cite{DMMV2023} to \cite{CADMMRV2025} --, that we briefly describe. The horocyclic index of a vertex $x\in\X$, with respect to the boundary point $\xi\in\partial\X$, is defined by
\[
|x|=d(x,o)-2d(o,x\wedge o).
\]
Furthermore, we shall use the notation $|Q|=|x_Q|$ for $Q\in\T$. When $q(x)=q(|x|)$, we say that $\X$ is horocyclic. Given such $\X$ and $\alpha>1$, we consider the (horocyclic) measure given by 
    \begin{equation}\label{eq: def mu}
        \mu_\alpha(o)=1,\qquad\text{and}\qquad \mu_\alpha(x^{(1)})=q(x^{(1)})^\alpha\mu_\alpha(x). 
    \end{equation}
For $x\in\X$ we have
\begin{align}\label{eq: mu set}
    \mu_\alpha(T_x)&=\sum_{m=0}^\infty \mu_\alpha(T_x\cap S(x,m))=\mu_\alpha(x)\sum_{m=0}^\infty \prod_{\ell=|x|}^{m+|x|-1} q(\ell)^{1-\alpha}\nonumber\\
    &\leq \mu_\alpha(x)\sum_{m=0}^\infty 2^{(1-\alpha)m}\lesssim \mu_\alpha(x),
\end{align}
and so by Lemma~\ref{lem: equiv Carl} $\mu_\alpha\in\mathcal C_\T=\mathcal C_\U=\mathcal C_\Q$. A function $f\colon \X\to \C$ is said to be harmonic at $x\in\X$ whenever
\[f(x)=\frac 1{q(x)+1}\sum_{y\sim x}f(y).\]
We say that $f$ is harmonic if it is harmonic at every vertex $x$. For $1\leq p<\infty$, we denote $L^p_\alpha=L^p(\mu_\alpha)$ and $\mathcal B_\alpha^p$ its subspace of harmonic functions. To identify the kernel of $\Proj$, which we denote $K_\alpha$, we do the following: given $Q\in\T$, we consider an orthonormal basis $\{e_Q^j\}_{j=1}^{q(x_Q)-1}$ of the space 
\[W_Q=\Bigl\{\psi\colon s(x_Q)\to \C\colon \sum_{v\in s(x_Q)}\psi(v)=0\Bigr\},\]
endowed with the scalar product with respect to the counting measure on $s(x_Q)$.
For every $1\leq j<q(x_Q)$, we set
\begin{equation*}
    h_Q^j=\sum_{v\in s(x_Q)}e_Q^j(v)\varphi_{|Q|}1_{T_v}
\end{equation*}
where 
\[ \varphi_k(x)=\sum_{j=1}^{|x|-k}\prod_{\ell=k+1}^{|x|-j}\frac 1{q(\ell)}\in [1,2).\]
Adapting easily from \cite{CADMMRV2025,DMMR2025,DMMV2023}, one can see that 
$\{h_Q^j\colon Q\in \T,1\leq j<q(x_Q)\}$
is a complete orthogonal system and that, if we fix $v\in s(x_Q)$,
\[\|h_Q^j\|_{2,\alpha}^2 
=\|\varphi_{|Q|}1_{T_v}\|_{2,\alpha}^2\sum_{u\in s(x_Q)}|e_Q^j(u)|^2
=\|\varphi_{|Q|}1_{T_v}\|_{2,\alpha}^2\approx \mu_\alpha(T_v).\]
For simplicity, since $\|h_Q^j\|_{2,\alpha}^2$ does not depend on $j$, we introduce the notation $E_\alpha(Q)=\|h_Q^j\|_{2,\alpha}^2$. We have $E_\alpha(R^{(1)})\approx \mu_\alpha(R)$. Then,
\begin{align*}
    K_\alpha&=\sum_{Q\in\T}K_{Q,\alpha},\qquad\text{where}\qquad K_{Q,\alpha}(x,y)=\frac{1}{E_\alpha(Q)}\sum_{j=1}^{q(x_Q)-1}h_Q^j(x)h_Q^j(y),\quad Q\in \T.
\end{align*}
Each $W_Q$ is a reproducing kernel Hilbert space, with kernel given by
$k_Q(u,v)=\delta_{uv}-q(x_Q)^{-1}$. Hence for $x,y\in \dot Q$, if we set $v_x,v_y\in s(x_Q)$ to be such that $x\in T_{v_x}$ and $y\in T_{v_y}$, we have
\begin{align}\label{eq: hh}
    \Bigg|\sum_{j=1}^{q(x_Q)-1}h_Q^j(x)h_Q^j(y)\Bigg| &=\Bigg|\sum_{j=1}^{q(x_Q)-1}e_Q^j(v_x)e_Q^j(v_y)\varphi_{|Q|}(x)\varphi_{|Q|}(y)\Bigg|\nonumber\\
    &=|k_Q(v_x,v_y)|\varphi_{|Q|}(x)\varphi_{|Q|}(y)\approx\begin{dcases*}
    1,&\text{if }$x\wedge y\neq x_Q$,\\
    q(x_Q)^{-1},&\text{if }$x\wedge y=x_Q$.
\end{dcases*}
\end{align}

The corresponding positive Bergman operator $\Proj^+$ is the integral operator with kernel
\[
K_\alpha^+(x,y):=\sum_{Q\in\T}|K_{Q,\alpha}(x,y)|
=\sum_{Q\in\T}\frac{1}{E_\alpha(Q)}\Bigg|\sum_{j=1}^{q(x_Q)-1}h_Q^j(x)h_Q^j(y)\Bigg|.
\]
Clearly, for $f\geq 0$, we have $|\Proj f|\leq \Proj^+ f$. 

\begin{rem}
We first recall that, in the continuous setting of the upper half-plane, the positive Bergman projector is obtained by replacing the Bergman kernel with its absolute value. Although the kernel exhibits oscillatory behaviour, it is complex-valued, and its modulus satisfies precise lower estimates. The situation is substantially different in our setting. Indeed, $K_\alpha$ is real-valued and oscillatory, and therefore its changes of sign force it towards zero. Since the underlying space is discrete, the kernel need not actually vanish, but its absolute value may nevertheless become arbitrarily small. As a consequence, $|K_\alpha|$ does not satisfy meaningful lower estimates of the desired type, and one is naturally led to replace it with a larger positive kernel.
Furthermore, the cancellation properties of $K_\alpha$ are encoded in each kernel $K_{Q,\alpha}$, $Q\in\T$, considered as a whole, rather than in the individual components $\langle f,h_Q^j\rangle h_Q^j$, whose number within a given cube $Q$ may be arbitrarily large. Taking the absolute value after summing over $j$, rather than term by term, therefore avoids introducing a dependence on the number of elements associated with $Q$. This is precisely what allows $\Proj^+$ to enjoy the correct size and to recover the upper estimates satisfied by $\Proj$. The distinction becomes essential when the branching function $q$ is unbounded.
\end{rem}

\begin{theorem}
    The following estimates for $\Proj^+$, and hence for $\Proj$, hold true:
    \begin{align*}
        \| \Proj^+(\cdot\sigma)\| _{L^{p}_\alpha(\sigma)\to L^{p}_\alpha(\omega)}
&\lesssim[\omega,\sigma]_{p,\Q}^{\frac 1p}\big([\sigma]_{\infty,\Q}^{\frac 1p}+[\omega]_{\infty,\Q}^{\frac 1{p'}}\big)&&\text{if }(\omega,\sigma)\in B^{\Q,2}_{p},\\
    \|\Proj^+\|_{L^1_\alpha(\omega)\to L^{1,\infty}_\alpha(\omega)}
        &\lesssim [\omega]_{1,\Q}\Bigl(\log[\omega]_{\infty,\Q}+1\Bigr)&&\text{if }\omega\in B_1^\Q.
    \end{align*}
    Furthermore, if $\Proj^+(\,\cdot\,\sigma)\colon L_\alpha^{p}(\sigma)\to L_\alpha^{p}(\omega)$ is bounded with $\omega,\sigma\in B_\infty^\Q$, then $(\omega,\sigma)\in  B^{\Q,2}_{p}$, and if $\Proj^+\colon L_\alpha^{1}(\omega)\to L_\alpha^{1,\infty}(\omega)$ is bounded, then $\omega\in B_1^\Q$.
    \end{theorem}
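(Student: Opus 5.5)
The plan is to prove the pointwise kernel comparison $K_\alpha^+(x,y)\sim K_{\mathcal A_\Q}(x,y)$ for all $x,y\in\X$, where $\mathcal A_\Q$ is built with respect to $\nu=\mu_\alpha\in\mathcal C_\Q$. By \eqref{eq:condKernel}–\eqref{eq:conclusionKernel} this gives $\Proj^+f\sim\mathcal A_\Q f$ for $f\geq0$. The upper estimates then follow from Proposition~\ref{prop: 2pesi} and Proposition~\ref{prop:weightsparsop} with $\Ss=\Q$. The bound for $\Proj$ follows from $|\Proj f|\leq\Proj^+|f|$. For the converse statements, boundedness of $\Proj^+(\cdot\,\sigma)$ transfers to $\mathcal A_\Q(\cdot\,\sigma)$ through the lower comparison, since $\mathcal A_\Q$ acts on $|f|$ and $\Proj^+|f|\gtrsim\mathcal A_\Q f$. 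The converse parts of Propositions~\ref{prop: 2pesi} and~\ref{prop:weightsparsop} (the latter with $p=1$) then conclude.

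\textbf{Computing $K_{\mathcal A_\Q}$.} Fix $x,y$ and set $z=x\wedge y$. The sets of $\Q$ containing both $x$ and $y$ are $T_{z^{(k)}}$ for $k\geq0$ and $U_{z^{(k)}}$ for $k\geq1$. They also include $U_z$ when $z\notin\{x,y\}$. By Proposition~\ref{prop: somma sect} the sum over this chain is comparable to $1/\nu$ of its smallest element. So
\[
K_{\mathcal A_\Q}(x,y)\approx\mu_\alpha(T_z)^{-1}\ \text{ if } z\in\{x,y\},\qquad K_{\mathcal A_\Q}(x,y)\approx\mu_\alpha(U_z)^{-1}\ \text{ otherwise}.
\]

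\textbf{Computing $K^+_\alpha$.} The term $K_{Q,\alpha}(x,y)$ vanishes unless $x,y\in\dot Q$, that is, unless $x_Q$ is a strict ancestor of both $x$ and $y$, so that $Q=T_{z^{(k)}}$ with $k\geq1$, or $Q=T_z$ with $z\notin\{x,y\}$. By \eqref{eq: hh}, when $x_Q\neq z$ the term equals $\approx E_\alpha(Q)^{-1}$. When $x_Q=z$ it equals $\approx q(z)^{-1}E_\alpha(Q)^{-1}$.

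We use $E_\alpha(Q)\approx\mu_\alpha(T_v)$ for $v\in s(x_Q)$. Horocyclicity makes all successor sectors of $x_Q$ have equal measure, so $E_\alpha(Q)\approx\mu_\alpha(\dot Q)/q(x_Q)$. Hence:
\begin{itemize}
\item if $z\notin\{x,y\}$, the term $Q=T_z$ contributes $\approx\mu_\alpha(U_z)^{-1}$;
\item if $x_Q=z^{(k)}$ with $k\geq1$, the term contributes $\approx 1/\mu_\alpha(T_{w})$, where $w=z^{(k-1)}\in s(z^{(k)})$.
\end{itemize}
The second family sums to $\sum_{k\geq0}\mu_\alpha(T_{z^{(k)}})^{-1}\approx\mu_\alpha(T_z)^{-1}$, again by Proposition~\ref{prop: somma sect} applied to $\T$.

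Summing the two cases: when $z\in\{x,y\}$ we get $K^+_\alpha\approx\mu_\alpha(T_z)^{-1}$. Otherwise we get $K^+_\alpha\approx\mu_\alpha(U_z)^{-1}+\mu_\alpha(T_z)^{-1}\approx\mu_\alpha(U_z)^{-1}$, since $U_z\subset T_z$. Both match $K_{\mathcal A_\Q}$.

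\textbf{Main obstacle.} The delicate step is the exact bookkeeping in the previous paragraph. It has three parts:
\begin{itemize}
\item verifying $E_\alpha(Q)\approx\mu_\alpha(T_v)$ uniformly, which needs $\varphi_k\in[1,2)$;
\item checking that radiality gives $\mu_\alpha(T_v)=\mu_\alpha(U_{x_Q})/q(x_Q)$, so the factor $q(z)^{-1}$ from \eqref{eq: hh} exactly produces $\mu_\alpha(U_z)^{-1}$ rather than $\mu_\alpha(T_z)^{-1}$;
\item tracking which of $x,y$ lies in $\dot Q$ at the confluent, to get the case split right.
\end{itemize}
Once the kernel equivalence holds with constants independent of $q$, the rest is a direct application of Section~\ref{sec:sec1}.
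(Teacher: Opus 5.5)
Your proposal is correct and follows essentially the same route as the paper. The paper also reduces everything to the pointwise comparison $K_\alpha^+\approx K_{\mathcal A_\Q}$: it splits $K_\alpha^+$ by support into the confluent term $|K_{T_{x\wedge y},\alpha}|$ (present only when $x\wedge y\notin\{x,y\}$) plus the ancestor terms, evaluates each via \eqref{eq: hh} and $E_\alpha(T_z^{(1)})\approx\mu_\alpha(T_z)$, and sums with Proposition~\ref{prop: somma sect}. Your explicit use of horocyclicity to turn $q(z)^{-1}E_\alpha(T_z)^{-1}$ into $\mu_\alpha(U_z)^{-1}$, and your spelled-out transfer of the converse statements, only make explicit steps the paper leaves implicit.
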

    \begin{proof}
        It is sufficient to show that the kernel $K_\alpha^+$ satisfies \eqref{eq:condKernel} with respect to $K_{\As}$ for an appropriately chosen family $\Ss$. If we take $\Ss=\Q$, this means that we have to show
        \begin{equation}\label{eq: cond K+}
        K_\alpha^+(x,y)\approx \begin{dcases*}
            \mu_\alpha(T_{x\wedge y})^{-1},&\text{if }$x\wedge y\in\{x,y\}$\\
            \mu_\alpha(U_{x\wedge y})^{-1},&\text{if }$x\wedge y\not\in\{x,y\}$,
        \end{dcases*}
        \end{equation}
        by Proposition~\ref{prop: somma sect}. The estimates for $\Proj$ will then follow from those of $\Proj^+$. Let $z\in\X$. Using that $E_\alpha(T_z^{(1)})\approx \mu_\alpha(T_z)$ and~\eqref{eq: hh}, for $x,y\in U_z^{(1)}$ we have 
        \begin{align}\label{eq: K_Q stime}
            |K_{T_z^{(1)},\alpha}(x,y)|\approx \begin{dcases*}
                \mu_\alpha(T_z)^{-1},&if $x\wedge y\neq z^{(1)}$,\\
                \mu_\alpha(U_z^{(1)})^{-1},&if $x\wedge y=z^{(1)}$.
            \end{dcases*}
        \end{align}
    Support considerations about $K_{Q,\alpha}$ yield
    \[K_\alpha^+(x,y)=|K_{T_{x\wedge y},\alpha}(x,y)|+\sum_{k=1}^{\infty}|K_{T_{x\wedge y}^{(k)},\alpha}(x,y)|,\]
    where the first term vanishes when $x\wedge y\in\{x,y\}$. Hence, by using~\eqref{eq: K_Q stime} we derive
    \[K_\alpha^+(x,y)\approx\begin{dcases*}
    \sum_{k=1}^{\infty}\mu_\alpha(T_{x\wedge y}^{(k-1)})^{-1},&\text{if }$x\wedge y\in\{x,y\}$,\\
        \mu_\alpha(U_{x\wedge y})^{-1}+\sum_{k=1}^{\infty}\mu_\alpha(T_{x\wedge y}^{(k-1)})^{-1},&\text{if }$x\wedge y\not\in\{x,y\}$,
    \end{dcases*}\]
    which by Proposition \ref{prop: somma sect} is what we wanted.
    \end{proof}

When $q$ is bounded, one has $|\Proj f| \lesssim \mathcal{A}_{\T}|f|$, see \cite{CADMMRV2025,DMMR2025,DMMV2023}. This choice is geometrically natural, since in this regime the Gromov metric, whose balls are precisely the sectors, turns the underlying metric measure space into a doubling one. The situation changes drastically when $q$ is unbounded: the doubling property is lost, and the example below shows that domination by sectors alone is no longer sufficient. More precisely, an additional term is genuinely needed, and it is exactly the one arising from the domination associated with the family $\U$. In this sense, the example shows that the two separate conditions appearing in~\eqref{eq: cond K+} are not merely technical artifacts, but are both necessary in the nondoubling setting.
Indeed, let $v\in\X$, consider the function $f_v=\delta_v\mu_\alpha(v)^{-1}\in L^1_\alpha$ and note that
\[\mathcal A_{\T}f_v(x)=\sum_{k=0}^{\infty}\avg{f_v}{T_{x\wedge v}^{(k)}}=\sum_{k=0}^{\infty}\frac 1{\mu_\alpha(T_{x\wedge v}^{(k)})}\lesssim \frac 1{\mu_\alpha(T_{x\wedge v})}, \]
by Proposition~\ref{prop: somma sect}.
Hence, for $u\in\X$, $u\neq v$ and $u^{(1)}=v^{(1)}$, by the fact that $K_\alpha^+$ satisfies~\eqref{eq: cond K+}, we have
\[\frac{\Proj^+f_v(u)}{\mathcal A_\T f_v(u)}\gtrsim K_\alpha^+(u,v)\mu_\alpha(T_v^{(1)})\gtrsim \frac{\mu_\alpha(T_v^{(1)})}{\mu_\alpha(U_v^{(1)})}\gtrsim q(v^{(1)})^{\alpha-1}.\]
Whenever $\|q\|_\infty=\infty$, we hence have that 
\[ \sup_{v\in\X}\left\|\frac{\Proj^+f_v}{\mathcal A_\T f_v}\right\|_{\alpha,\infty}=+\infty.\]
In an analogous way, it can be shown that $\mathcal A_\U$ would not be sufficient on its own.

\begin{rem}
    Let $\varphi\in L_\alpha^{\infty}$ and consider the associated Toeplitz operator
$T_{\varphi}f=\Proj(f\varphi)$. Notice that, if we take a weight $\omega$ and consider $\sigma=\omega^{-\frac{1}{p-1}}\varphi^{p'}$,
then
\begin{align*}
    \| T_{\varphi}f\|_{L_{\alpha}^{p}(\omega)}&= \|\Proj((f\varphi\sigma^{-1})\sigma)\|_{L_{\alpha}^{p}(\omega)}
    \\
    &\leq
    \| \Proj(\cdot\sigma)\|_{L_{\alpha}^{p}(\sigma)\to L_{\alpha}^{p}(\omega)}\|f\varphi\sigma^{-1}\| _{L_{\alpha}^{p}(\sigma)}
    =\|\Proj(\cdot\sigma)\|_{L_{\alpha}^{p}(\sigma)\to L_{\alpha}^{p}(\omega)}\| f\| _{L_{\alpha}^{p}(\omega)}.
\end{align*}
This means that
\begin{align*}
    \|T_\varphi\|_{L^p_\alpha(\omega)\to L^p_\alpha(\omega)}
    \leq&\| \Proj(\cdot\sigma)\|_{L_{\alpha}^{p}(\sigma)\to L_{\alpha}^{p}(\omega)}\\
    \lesssim & [\omega,(\omega^{-\frac 1p}\varphi)^{p'}]_{p,\Q}^{\frac 1p}\bigg(\big[(\omega^{-\frac 1p}\varphi)^{p'}\big]_{\infty,\Q}^{\frac 1p}+\big[\omega\big]_{\infty,\Q}^{\frac 1{p'}}\bigg).
\end{align*}
In the specific case in which $\varphi=\mu_{b}$ for some $b\geq0$,
notice that $T_{\mu_{b}}\colon L^{p}_\alpha\to L^{p}_\alpha$ if
\begin{align*}
[1,\mu_{bp'}]_{p,\Q}&=\sup_{Q\in\Q}\Bigg(\frac{\mu_{\alpha+bp'}(Q)}{\mu_{\alpha}(Q)}\Bigg)^{p-1}<+\infty,
\end{align*}
which is finite if and only if $\alpha+bp'>1$, i.e. $bp>(1-\alpha)(p-1)$.
This result extends the known result for Toeplitz operators on the disk (see~\cite{Zhu2007}) to the nondoubling discrete setting.
\end{rem}

\noindent\textbf{Flow measures on trees and forward Laplacians.} A second application of our sparse domination principle is obtained by considering the operators introduced in~\cite{OS2026}. Let $\X$ be any locally finite tree, and let $m$ be a flow measure on $\X$. This means that
$m(x)=m(s(x))$ for every $x\in \X$. We say that a function is $m$-harmonic if
\[ f(x)=\sum_{y\in s(x)}\frac{m(y)}{m(x)}f(y) \mbox{ holds for all } x\in\X.\]
We stress that the harmonicity here only takes into account the downward direction of the flow, because the value of $f$ in $x^{(1)}$ is not involved in the average. The corresponding Laplacian is therefore a differential operator of order $1$, unlike the usual one. Let $\sigma\in L^1(\mathbb Z)$ be nonnegative. We define the measure 
\[m\sigma(x):=m(x)\sigma(|x|),\qquad x\in\X. \]
It is easy to see that, for every $x\in\X$, we have 
    \begin{equation}
        \label{eq: sect ms} m\sigma(T_x)=m(x)S_{|x|},\qquad m\sigma(U_x)=m(x)S_{|x|+1}\qquad \text{where}\qquad S_N:=\sum_{n=N}^{\infty}\sigma(n).
    \end{equation}
We consider the $m$-Bergman space $\mathcal B_m^2=\{f\in L^2(m\sigma)\colon f\text{ is $m$-harmonic}\}$ and the $m$-Bergman orthogonal projector $\Proj_m\colon L^2(m\sigma)\to \mathcal B_m^2$. The authors of~\cite{OS2026} prove that $\sigma\in L^1(\mathbb Z)$ is necessary to have a nonempty subspace of harmonic functions within $L^2(m\sigma)$, and that $\Proj_m$ extends to a bounded operator on $L^p(m\sigma)$ for every $1<p<\infty$ and is of weak type (1,1).  For a certain class of measures $m\sigma$ we can say more.

\begin{rem}
    We have $m\sigma\in \mathcal C_\T=\mathcal C_\U=\mathcal C_\Q$ if and only if there exists $N\in\mathbb N$ and $0<c<1$ such that
    \begin{equation}\label{eq: S simili}
        S_{M+N}\leq c S_M,\qquad M\in\mathbb Z.
    \end{equation}
    Indeed, by the proof of Lemma~\ref{lem: equiv Carl}, saying that $m\sigma\in\mathcal C_\T$ is equivalent to having that for some $N\in\N$, 
    \begin{equation}\label{eq: ms Carl}
        m\sigma(T_x)\lesssim m\sigma(T_x\cap B(x,N-1)),\qquad x\in \X.
    \end{equation} 
    Observe that for $x\in\X$
    \begin{equation*}
        m\sigma(T_x\cap B(x,N-1))=\sum_{k=0}^{N-1}\sigma(|x|+k)\sum_{\substack{y\in T_x\\|y|=|x|+k}}m(y)=m(x)(S_{|x|}-S_{|x|+N}),
    \end{equation*} 
    which, together with~\eqref{eq: S simili}, gives~\eqref{eq: ms Carl}.
\end{rem}

In what follows, we assume $m\sigma\in\mathcal C_{\T}$. Otherwise, $L^p$-boundedness of $\mathcal A_{\Q}$ is no longer guaranteed and our method does not provide new weighted inequalities. It remains an interesting open question whether such weighted inequalities can be obtained through a different procedure. In~\cite{OS2026}, it is shown that
        \[\mathcal{P}_mf(x)=\sum_{y\in\X} K_{\mathcal{P}_m}(x,y)f(y)m\sigma(y),\qquad K_{\mathcal{P}_m}(x,y)=\sum_{n=0}^{\infty}\frac{\Psi(x^{(n+1)},x^{(n)},y)}{S_{|x|-n}},\]
    where 
    \begin{equation}\label{eq: def Psi}
        \Psi(v,x,y)=\sum_{u\in s(v)}\frac 1{m(u)}1_{T_u^2}(x,y)-\frac 1{m(v)}1_{U_v^2}(x,y).
    \end{equation}
    In analogy with the horocyclic case, we introduce the operator 
        \[\mathcal{P}^+_mf(x)=\sum_{y\in\X} K_{\mathcal{P}_m}^+(x,y)f(y)m\sigma(y),\qquad K_{\mathcal{P}_m}^+m(x,y)=\sum_{n=1}^{\infty}\frac{|\Psi(x^{(n)},x^{(n-1)},y)|}{S_{|x|-n+1}}.\]

    We use our argument, together with the explicit formulation of the projector to prove the following result.
    \begin{theorem}
       Assume $m\sigma\in\mathcal C_{\Q}$. The following estimates for $\mathcal{P}_m^+$, and hence for $\mathcal{P}_m$, hold true:
    \begin{align*}
        \| \mathcal{P}_m^+(\cdot\tau)\| _{L_{m\sigma}^{p}(\tau)\to L_{m\sigma}^{p}(\omega)}
&\lesssim[\omega,\tau]_{p,\Q}^{\frac 1p}\big([\tau]_{\infty,\Q}^{\frac 1p}+[\omega]_{\infty,\Q}^{\frac 1{p'}}\big)&&\text{if }(\omega,\tau)\in B^{\Q,2}_{p}(m\sigma),\\
    \|\mathcal{P}_m^+\|_{L_{m\sigma}^1(\omega)\to L_{m\sigma}^{1,\infty}(\omega)}
         &\lesssim [\omega]_{1,\Q}\Bigl(\log[\omega]_{\infty,\Q}+1\Bigr)&&\text{if }\omega\in B_1^\Q(m\sigma).
        \end{align*}
        Furthermore, assume
        \begin{equation}\label{eq: ipo Km+}
            \sup_{x\in\X}\frac{m(x)}{m(x^{(1)})}<1.
        \end{equation}
        If $\mathcal{P}_m^+(\,\cdot\,\tau)\colon L_{m\sigma}^{p}(\tau)\to L_{m\sigma}^{p}(\omega)$ is bounded with $\omega,\tau\in B_\infty^\Q(m\sigma)$, then $(\omega,\tau)\in B^{\Q,2}_{p}(m\sigma)$, and if $\mathcal{P}_m^+\colon L_{m\sigma}^{1}(\omega)\to L_{m\sigma}^{1,\infty}(\omega)$ is bounded, then $\omega\in B_1^\Q(m\sigma)$.
    \end{theorem}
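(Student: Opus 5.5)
The plan is to reduce everything to the kernel comparison principle \eqref{eq:condKernel}--\eqref{eq:conclusionKernel} with $\Ss=\Q$ and $\nu=m\sigma\in\mathcal C_\Q$. Once we know $|K_{\Proj_m}|\le K^+_{\Proj_m}\lesssim K_{\mathcal A_\Q}$, the upper bounds for $\Proj_m^+$ follow at once, and hence those for $\Proj_m$. Indeed, applying the pointwise domination to $f\tau$, the two-weight estimate comes from Proposition~\ref{prop: 2pesi} and the weak type $(1,1)$ estimate from Proposition~\ref{prop:weightsparsop}. Under \eqref{eq: ipo Km+} we will instead prove $K^+_{\Proj_m}\approx K_{\mathcal A_\Q}$. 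Then $\Proj^+_m f\approx \mathcal A_\Q f$ for $f\ge0$, and the converse statements of Propositions~\ref{prop: 2pesi} and~\ref{prop:weightsparsop} transfer verbatim.

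\emph{Step 1: the model kernel.} Set $z=x\wedge y$. The sets of $\Q$ containing both $x$ and $y$ are of three kinds:
\begin{itemize}
\item the sectors $T_w$ with $w\in[z,\xi)$;
\item the pruned sectors $U_w$ with $w\in[z^{(1)},\xi)$;
\item the set $U_z$, but only when $z\notin\{x,y\}$.
\end{itemize}
Since $m\sigma(U_w)\ge m\sigma(T_{w'})$ for the successor $w'$ of $w$ on $[z,\xi)$, Proposition~\ref{prop: somma sect} gives
\[
K_{\mathcal A_\Q}(x,y)\approx \frac1{m\sigma(T_z)}+\frac{1_{\{z\notin\{x,y\}\}}}{m\sigma(U_z)} .
\]
This is the analogue of \eqref{eq: cond K+}, noting that $m\sigma(U_z)\le m\sigma(T_z)$.

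\emph{Step 2: computing $|\Psi|$.} Fix $n\ge1$ and write $v=x^{(n)}$, $u=x^{(n-1)}\in s(v)$. From \eqref{eq: def Psi}, $\Psi(v,u,y)=0$ unless $y\in U_v$; this forces $v$ to be an ancestor of $z$, or $v=z$ with $y\ne z$. There are two nontrivial cases.
\begin{itemize}
\item If $y\in T_u$, which happens exactly when $v$ lies strictly above $z$, then $\Psi=\frac1{m(u)}-\frac1{m(v)}$. Since $m(u)\le m(s(v))=m(v)$, we get $|\Psi|\le 1/m(u)$.
\item If $y\in U_v\setminus T_u$, which happens exactly when $v=z\neq y$ and $x\ne z$, then $\Psi=-1/m(z)$.
\end{itemize}
Using \eqref{eq: sect ms}, the first case contributes $|\Psi|/S_{|u|}\le 1/(m(u)S_{|u|})=1/m\sigma(T_u)$, with $u$ ranging over $[z,\xi)$. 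The second case contributes $1/(m(z)S_{|z|+1})=1/m\sigma(U_z)$, and only when $z\notin\{x,y\}$. When $z=x$ the index $n=0$ is excluded from $K^+$, so no such term appears. Summing and invoking Proposition~\ref{prop: somma sect} once more yields $K^+_{\Proj_m}\lesssim K_{\mathcal A_\Q}$ by Step 1. The bound $|K_{\Proj_m}|\le K^+_{\Proj_m}$ is immediate after the reindexing $n\mapsto n-1$.

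\emph{Step 3: the lower bound under \eqref{eq: ipo Km+}.} Let $m(u)/m(u^{(1)})\le c<1$. In the first case of Step 2, take the single term with $u=z$, i.e. $n=d(x,z)+1$, which is always admissible. It gives
\[
|\Psi|/S_{|z|}\ \ge\ \Bigl(1-c\Bigr)\frac{1}{m(z)S_{|z|}}=\frac{1-c}{m\sigma(T_z)},
\]
and the second case still supplies $1/m\sigma(U_z)$ whenever $z\notin\{x,y\}$. Together these give $K^+_{\Proj_m}\gtrsim K_{\mathcal A_\Q}$.

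I expect the main obstacle to be the careful bookkeeping of the cases in Step 2. One has to identify exactly which indices $n$ give nonzero $\Psi$, handle the degenerate situations $z=x$ and $z=y$ correctly, and check that each term matches a set of $\Q$ at the right level. Without \eqref{eq: ipo Km+}, the cancellation $1/m(u)-1/m(v)$ can destroy the lower bound, which is why that hypothesis is needed only for the converse.
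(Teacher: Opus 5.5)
Your proposal is correct and follows the paper's proof. Both reduce the statement to the kernel comparison $K^+_{\Proj_m}\lesssim K_{\mathcal A_\Q}$, and $\approx$ under \eqref{eq: ipo Km+}, against the model kernel $m\sigma(T_{x\wedge y})^{-1}+1_{\{x\wedge y\notin\{x,y\}\}}\,m\sigma(U_{x\wedge y})^{-1}$. Both isolate the single term with $x^{(n)}=x\wedge y$ and bound the remaining terms by $m\sigma(T_u)^{-1}$ for $u\in[x\wedge y,\xi)$, summing these via Proposition~\ref{prop: somma sect}. The only minor difference is in the lower bound: you use just the term with $u=x\wedge y$, while the paper bounds every term of the series from below; either suffices.
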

    \begin{proof}
        We prove that $K_{\mathcal{P}_m}^+$ satisfies~\eqref{eq: cond K+}. Take $x,y\in\X$ and observe that $\Psi(x^{(n)},x^{(n-1)},y)=0$ whenever $1\leq n< |x|-|x\wedge y|$, by the definition of $\Psi$. Hence
        \[K_{\mathcal{P}_m}^+(x,y)=\frac{|\Psi(x\wedge y,x^{|x|-|x\wedge y|-1)},y)|}{S_{|x\wedge y|+1}}+\sum_{n=|x|-|x\wedge y|+1}^{\infty}\frac{|\Psi(x^{(n)},x^{(n-1)},y)|}{S_{|x|-n+1}}.\]
        Note that the first term vanishes whenever $x\wedge y\in\{x,y\}$, and is otherwise controlled from above and below by $m\sigma(U_{x\wedge y})$, by~\eqref{eq: def Psi} and~\eqref{eq: sect ms}. We wrote it with a small abuse of notation in the case $x=x\wedge y$. Each of the terms of the series can be controlled by the inverse of the measure of the respective sector, always by~\eqref{eq: sect ms}, yielding
        \begin{align*}
        K_{\mathcal{P}_m}^+(x,y)& \lesssim \frac{1}{m\sigma(U_{x\wedge y})}1_{x\wedge y \in \{x,y\}}(x,y)+\sum_{k=0}^{\infty}\frac{1}{m\sigma(T_{x\wedge y}^{(k)})} \\
        & \approx \frac{1}{m\sigma(U_{x\wedge y})}1_{x\wedge y \in \{x,y\}}(x,y)+\frac{1}{m\sigma(T_{x\wedge y})},
        \end{align*}
        by Proposition~\ref{prop: somma sect}. If the measure $m$ satisfies~\eqref{eq: ipo Km+}, then we can also get the estimate from below for $K_{\mathcal{P}_m}^+$, by the fact that, for $n>|x\wedge y|-|x|$,
        \[\frac{|\Psi(x^{(n)},x^{(n-1)},y)|}{S_{|x|-n+1}}=\frac 1{S_{|x|-n+1}}\Bigg|\frac 1{m(x^{(n-1)})}-\frac 1{m(x^{(n)})}\Bigg| \gtrsim \frac 1{m(x^{(n-1)})S_{|x|-n+1}}=\frac 1{m\sigma(T_x^{(n-1)})}. \]
    \end{proof}

\noindent\textbf{Bekollé-Bonami weights on trees.} Let $\nu \in \mathcal C_\Q$. We aim to determine whether there are inclusion relations between the $B_p$ classes of weights associated with $\T$, $\U$ and $\Q$.
We show that in general, both the classes $B_p^\T$ and $B_p^\U$ are necessary in the intersection $B_p^{\Q}=B_p^{\T}\cap B_p^{\U}$. 

\begin{prop}
    Let $\nu\in\mathcal C_\Q$ and $1\leq p<\infty$. The following equivalences hold true:
    \begin{align*}
[\omega]_{p,\U}\lesssim [\omega]_{p,\T},\, \omega\in  B_p^\T
&\iff \nu(T_x)\lesssim\nu(U_x),\,x\in\X;\\
[\omega]_{p,\T}\lesssim [\omega]_{p,\U},\, \omega\in B_p^\U
&\iff \nu(U_x^{(1)})\lesssim\nu(T_x),\, x\in\X.
\end{align*}
\end{prop}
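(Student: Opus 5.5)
For both equivalences, the implication ``$\Leftarrow$'' comes from comparing averages over nested sets. The implication ``$\Rightarrow$'' will be proved by contraposition: if the measure ratio is unbounded, I build a family of weights whose constant for one family stays bounded while the constant for the other family blows up.

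\emph{Sufficiency.} Assume $\nu(T_x)\le C\nu(U_x)$ for all $x\in\X$. Since $U_x\subseteq T_x$, for every nonnegative $g$ we get
\[
\avg{g}{U_x}\le \frac{\nu(T_x)}{\nu(U_x)}\avg{g}{T_x}\le C\avg{g}{T_x}.
\]
For $1<p<\infty$, apply this to $g=\omega$ and $g=\omega^{-\frac1{p-1}}$. This gives
\[
\avg{\omega}{U_x}\avg{\omega^{-\frac1{p-1}}}{U_x}^{p-1}\le C^p\,\avg{\omega}{T_x}\avg{\omega^{-\frac1{p-1}}}{T_x}^{p-1}.
\]
Taking the supremum over $x$ yields $[\omega]_{p,\U}\lesssim[\omega]_{p,\T}$. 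For $p=1$, use in addition $\|\omega^{-1}1_{U_x}\|_\infty\le\|\omega^{-1}1_{T_x}\|_\infty$. The second equivalence is identical, because $T_x\subseteq U_{x^{(1)}}$: now every $T_x$ is compared with $U_{x^{(1)}}$, with constant $\nu(U_{x^{(1)}})/\nu(T_x)$.

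\emph{Necessity for the first equivalence.} Suppose $R_x:=\nu(T_x)/\nu(U_x)$ is unbounded. Then $\nu(x)\ge(1-R_x^{-1})\nu(T_x)$, so the atom $x$ carries almost all of $T_x$. Fix such an $x$ and choose a set $A=\bigcup_{v\in\mathcal V}T_v$, with $\mathcal V\subseteq s(x)$, such that $\theta:=\nu(A)/\nu(U_x)$ is bounded away from $0$ and $1$. Define $\omega=t$ on $A$ and $\omega=1$ elsewhere, with $t=R_x$ (for $p=1$) or $t$ a suitable power of $R_x$.

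On the $\U$ side, the set $U_x$ alone gives $[\omega]_{p,\U}\gtrsim t\,\theta(1-\theta)^{p-1}\to\infty$, with the obvious analogue $[\omega]_{1,\U}\gtrsim t\theta$ for $p=1$.

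On the $\T$ side the constant stays bounded, for three reasons:
\begin{itemize}
\item every $T_y\subsetneq T_x$ lies either inside some $T_v$ or outside $A$, so $\omega$ is constant on it;
\item on $T_x$, the atom $x$ dominates and the extra mass $t\nu(A)\le t\nu(U_x)=t\nu(T_x)/R_x$ is harmless for $t\le R_x$, while $\omega^{-\frac1{p-1}}\le1$;
\item every ancestor sector, and every $U_y$ containing $T_x$, contains $T_x$, so the same estimate applies there.
\end{itemize}
This contradicts $[\omega]_{p,\U}\lesssim[\omega]_{p,\T}$ uniformly. For the second equivalence the roles are reversed. When $\nu(U_{x^{(1)}})/\nu(T_x)$ is large, the siblings of $x$ carry most of $U_{x^{(1)}}$, so a weight that oscillates inside $T_x$ is invisible to $U_{x^{(1)}}$ and its ancestors. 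I would split $T_x$ into $\{x\}$ and $U_x$, or into subsectors of $T_x$, put $\omega=t$ on one part, and check that $[\omega]_{p,\U}$ stays bounded while $[\omega]_{p,\T}\gtrsim t$.

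\emph{Main obstacle.} The delicate step is the balancing choice of $A$, since one successor sector may carry almost all of $U_x$ (or $\{x\}$ may dominate $T_x$). To handle this I would first run the greedy union over $s(x)$; this gives $\theta\in[\tfrac13,\tfrac23]$ unless a single $T_v$ carries more than $\tfrac23$ of $U_x$. In that dominant case I would descend inside $T_v$ and put the weight on a nested set far down, using Proposition~\ref{prop: somma sect} and the Carleson property (Lemma~\ref{lem: equiv Carl}) to control the bounded-overlap chain. I expect that it suffices to obtain a ratio of the $\U$ and $\T$ constants that tends to infinity, rather than an exact balance, and I would aim for that weaker quantitative statement.
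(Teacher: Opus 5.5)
\textbf{Gap in the necessity direction.} Your sufficiency argument matches the paper's, and your test weights $1+(t-1)1_A$ have the right shape. The necessity part, however, has a genuine gap at exactly the step you flag. Because you tie $t$ to $R_x$ alone, you need $\theta=\nu(A)/\nu(U_x)$ bounded below, and unions of successor sectors cannot guarantee this.

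Your proposed remedy for the dominant case also goes the wrong way. Once $A$ sits inside a successor sector $T_v$ carrying more than $\frac23$ of $U_x$, your first bullet (that $\omega$ is constant on every $T_y\subsetneq T_x$) is lost, and $T_v$ sees essentially the same oscillation as $U_x$. For instance, if $\frac13\le\theta\le\frac12$, then $\avg{\omega}{T_v}\ge t/3$ and $\avg{\omega^{-\frac1{p-1}}}{T_v}\ge\frac16$. Hence $[\omega]_{p,\T}\gtrsim t\ge[\omega]_{p,\U}$, and the ratio stays bounded.

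The same issue affects the second equivalence, which you only sketch. The claim $[\omega]_{p,\T}\gtrsim t$ for $\omega=t$ on $\{x\}$ or on $U_x$ requires $\nu(x)/\nu(T_x)$ to be bounded away from both $0$ and $1$, and nothing forces this. (Splitting $T_x$ into subsectors does not help either, since $U_x$ then sees the oscillation.)

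\textbf{The missing idea.} No balance is needed. The price paid on the controlled side, $t\,\nu(A)/\nu(T_x)$, involves the same $\nu(A)$ as the gain $t\theta$ on $U_x$.

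The paper fixes $x$ and picks $y\in s(x)$ with $2\nu(T_y)\le\nu(U_x)$, which is possible since $q(x)\ge2$. For $\omega_t=1+(t-1)1_{T_y}$ it gets $\frac{t}{2^{p-1}}\frac{\nu(T_y)}{\nu(U_x)}\le[\omega_t]_{p,\U}\le D[\omega_t]_{p,\T}\le D\bigl(1+t\frac{\nu(T_y)}{\nu(T_x)}\bigr)$. Letting $t\to\infty$, the factor $\nu(T_y)$ cancels and $\nu(T_x)\le 2^{p-1}D\nu(U_x)$. In your contrapositive language: put the weight on a light piece and take $t=R_x/\theta$ instead of $t=R_x$.

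For the second equivalence the paper uses two weights: $1+(t-1)\delta_x$ with $t\to\infty$, and $1+(t-1)1_{U_x}$ with $t\to0$. These give $\bigl(\nu(U_x)/\nu(T_x)\bigr)^{p-1}\le D\,\nu(T_x)/\nu(U_{x^{(1)}})$ and $\nu(x)/\nu(T_x)\lesssim\bigl(\nu(T_x)/\nu(U_{x^{(1)}})\bigr)^{p-1}$. It then combines them via $\nu(x)+\nu(U_x)=\nu(T_x)$ and the monotonicity of $\Phi(s)=s^{\frac1{p-1}}+s^{p-1}$.

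Your sketch can be repaired more directly. Let $P\in\{\{x\},U_x\}$ be the piece with $\nu(P)\le\frac12\nu(T_x)$, and set $\omega=1+(t-1)1_P$ with $t=\nu(U_{x^{(1)}})/\nu(P)\ge1$.
\begin{itemize}
\item Every pruned sector meeting both $P$ and its complement contains $U_{x^{(1)}}$, so $[\omega]_{p,\U}\le 2$.
\item The constant of $T_x$ is at least $2^{1-p}\nu(U_{x^{(1)}})/\nu(T_x)$.
\end{itemize}
This argument also covers $p=1$ without change.
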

\begin{proof}
    We prove the result only for $p>1$. The proof for $p=1$ is analogous. Assume that there exists $C>1$ such that $\nu(T_x)\leq C\nu(U_x)$ for $x\in \X$. Take $\omega\in B_p^\T$. We have that 
    \[\avg{\omega}{U_x}\avg{\omega^{-\frac 1{p-1}}}{U_x}^{p-1}\leq C^p\frac {\omega(T_x)}{\nu(T_x)}\Big(\frac {\omega^{-\frac 1{p-1}}(T_x)}{\nu(T_x)}\Big)^{p-1}=C^p \avg{\omega}{T_x}\avg{\omega^{-\frac 1{p-1}}}{T_x}^{p-1},\]
    so that $[\omega]_{p,\U}\leq C^p[\omega]_{p,\T}$. Similarly, if $\nu(U_x^{(1)})\leq C\nu(T_x)$, $x\in\X$, then $[\omega]_{p,\T}\leq C^p[\omega]_{p,\U}$. Assume now $[\omega]_{p,\U}\leq D[\omega]_{p,\T}$, for some $D>0$. Let $x\in\X$. Since $q(x)\geq 2$, there exists $y\in s(x)$ such that $2\nu(T_y)\leq \nu(U_x)$. For $t>1$, we consider the weight $\omega_t=1+(t-1)1_{T_y}$. Notice that 
    \begin{align*}
        \avg{\omega_t}{U_x}\avg{\omega_t^{-\frac 1{p-1}}}{U_x}^{p-1}&=\Bigg(1+(t-1)\frac{\nu(T_y)}{\nu(U_x)}\Bigg) \Bigg(1+(t^{-\frac{1}{p-1}}-1)\frac{\nu(T_y)}{\nu(U_x)}\Bigg)^{p-1}\geq\frac t{2^{p-1}}\frac{\nu(T_y)}{\nu(U_x)}.
    \end{align*}
   On the other hand, since $\omega_t$ is constant on every sector that does not properly contain $T_y$, we have
    \begin{align*}
        [\omega_t]_{p,\T}&=\sup_{k\geq 1}\avg{\omega_t}{T_y^{(k)}}\avg{\omega_t^{-\frac 1{p-1}}}{T_y^{(k)}}^{p-1}\leq \sup_{k\geq 1} \Bigg(1+t\frac{\nu(T_y)}{\nu(T_y^{(k)})}\Bigg)\leq  1+t\frac{\nu(T_y)}{\nu(T_x)},
    \end{align*}
    by using that $\omega_t^{-\frac 1{p-1}}\leq 1$. By putting everything together, we have
    \[\frac t{2^{p-1}}\frac{\nu(T_y)}{\nu(U_x)} \leq [\omega_t]_{p,\U} \leq D[\omega_t]_{p,\T}\leq D\Bigg( 1+t\frac{\nu(T_y)}{\nu(T_x)}\Bigg).\]
    Dividing by $t\nu(T_y)$ both members and letting $t\to \infty$, we get  $\nu(T_x)\leq 2^{p-1}D\nu(U_x)$. Assume now $[\omega]_{p,\T}\leq D[\omega]_{p,\U}$, for some $D>0$. For $x\in\X$ and $t>1$ we consider the weight
    \[ \omega_t=1+(t-1)\delta_x.\]
    We argue as before, getting
    \[t\frac{\nu(x)}{\nu(T_x)}\frac{\nu(U_x)^{p-1}}{\nu(T_x)^{p-1}}\leq [\omega_t]_{p,\T}\leq D[\omega_t]_{p,\U}\leq D\Bigg(1+t\frac{\nu(x)}{\nu(U_{x^{(1)}})}\Bigg),\]
    and by letting $t\to\infty$ we have
    \[\frac{\nu(U_x)}{\nu(T_x)}\leq \Bigg(D\frac{\nu(T_x)}{\nu(U_{x^{(1)}})}\Bigg)^{\frac 1{p-1}}.\]
    We consider now the weight
    \[ \sigma_t=1+(t-1)1_{U_x}.\]
    When $0<t<1$, we have that
    \[ [\sigma_t]_{p,\T}\geq
\avg{\sigma_t}{T_x}\avg{\sigma_t^{-\frac 1{p-1}}}{T_x}^{p-1}\geq \frac{\nu(x)}{\nu(T_x)}\frac 1 t\Bigg(\frac{\nu(U_x)}{\nu(T_x)}\Bigg)^{p-1},\]
    and by Jensen's inequality
    \[[\sigma_t]_{p,\U}=\sup_{k\geq 1}\avg{\sigma_t}{U_x^{(k)}}\avg{\sigma_t^{-\frac 1{p-1}}}{U_x^{(k)}}^{p-1}\lesssim_p \sup_{k\geq 1}\Bigg(1+\frac 1 t\Bigg(\frac{\nu(U_x)}{\nu(U_x^{(k)})}\Bigg)^{p-1}\Bigg) \leq 1+\frac 1 t\Bigg(\frac{\nu(U_x)}{\nu(U_x^{(1)})}\Bigg)^{p-1}.\]
    We put everything together, by multiplying by $ t\nu(U_x)^{1-p}$ and letting $ t\to 0$, we have 
    \[\frac{\nu(x)}{\nu(T_x)}\leq \tilde D\Bigg(\frac{\nu(T_x)}{\nu(U_x^{(1)})}\Bigg)^{p-1},\]
    for some $\tilde D>0$ depending on $p$. Observe that for some $D'>0$ we have
    \[ 1=\frac{\nu(U_x)}{\nu(T_x)}+\frac{\nu(x)}{\nu(T_x)}\leq \Phi\Bigg(D'\frac{\nu(T_x)}{\nu(U_x^{(1)})}\Bigg),\qquad\text{with}\qquad\Phi(s)=s^{\frac{1}{p-1}}+s^{p-1}.\]
    By the fact that $\Phi$ is increasing and that $\Phi(s)\to 0$ as $s\to 0$, we get $\nu(U_x^{(1)})\lesssim\nu(T_x)$.
\end{proof}

It is possible to see that on a tree $\X$ with a measure $\nu\in\mathcal C_\Q$ there are in general no inclusions between the classes $B_p^\T$ and $B_p^\U$, even when allowing the geometry of the tree to be horocyclic.
\begin{example}
    Let $\X$ be a horocyclic tree with $q(n)\to\infty$ as $n\to\infty$ endowed with the horocyclic measure $\mu_\alpha$, $\alpha>1$, introduced in~\eqref{eq: def mu}.
Using that $q\geq2$, it is possible to
consider a family $\left\{ R_{n}\right\} _{n=0}^{\infty}\subseteq\mathcal{T}$
of sectors such that $R_n=T_{x_n}$ with $|x_n|=n+1$ and $R_{n+1}^{(2)}=R_n^{(1)}\neq R_{n-1}$. Observe that the members of the family are pairwise disjoint by construction.
    \begin{enumerate}
        \item We consider the weight
\begin{equation*}
\omega=1+\sum_{n=1}^{\infty}(q(n)^\alpha-1) 1_{R_{n}}.
\end{equation*}
    By direct computation we have 
    \[\avg{\omega}{\dot R_n^{(1)}}\avg{\omega^{-\frac 1{p-1}}}{\dot R_n^{(1)}}^{p-1}\geq\Big(1-\frac 1{q(n)}+q(n)^{\alpha-1}\Big)\avg{\omega^{-\frac 1{p-1}}}{\dot R_n^{(1)}}^{p-1}\gtrsim q(n)^{\alpha-1}\to\infty,\] so that $\omega\not\in B_p^\U$. Furthermore, $\omega\in B_p^\T$ since the nontrivial means on sectors are controlled by $\avg{\omega^{-\frac 1{p-1}}}{R_n^{(1)}}\leq 1$ and by \eqref{eq: mu set}
    \[\avg{\omega}{R_n^{(1)}}\leq 1+\sum_{m=n}^{\infty}\frac{\mu_\alpha(R_m)q(m)^{\alpha}}{\mu_\alpha(R_n^{(1)})}\lesssim 1+\sum_{m=n}^{\infty}\prod_{j=n}^{m-1}q(j)^{-\alpha} 
    \lesssim \sum_{m=n}^{\infty}2^{\alpha(n-m)}\lesssim 1.\]
    \item We consider the weight
    \[\omega=1+\sum_{n=1}^{\infty}(q(n)q(n+1)^{\alpha-1}-1)1_{\dot R_n}.\]
    Again, by direct computation, we can show that 
    \[\avg \omega{R_n}\avg{\omega^{-\frac 1{p-1}}}{R_n}^{p-1}\geq\frac{\omega(\dot R_n)}{\mu_\alpha(R_n)}\avg{\omega^{-\frac 1{p-1}}}{R_n}^{p-1}=q(n)q(n+1)^{\alpha-1}\frac{\mu_\alpha(\dot R_n)}{\mu_\alpha(R_n)}\avg{\omega^{-\frac 1{p-1}}}{R_n}^{p-1}\gtrsim q(n),\] 
    and so $\omega\not\in B_p^\T$. On the other hand, the weight is built in a way that $\omega\in B_p^\U$. Indeed, all the $U\in\U$ on which $\omega$ is not constant are the predecessors of $R_n$'s, where $\omega\geq 1$ and, since $q(m)q(m+1)^{\alpha-1}\mu_\alpha(\dot R_m)\approx \mu_\alpha(\dot R_{m-1})$, we get
    \[\avg{\omega}{\dot R_n^{(1)}}\leq 1+\sum_{m=n}^{\infty}\frac{\omega(\dot R_m)}{\mu_\alpha(\dot R_n^{(1)})}=1+\sum_{m=n}^{\infty}\frac{\mu_\alpha(\dot R_{m-1})}{\mu_\alpha(\dot R_{n-1})}\lesssim 1,\]
    as in the previous example.
    \end{enumerate}
\end{example}

The situation is different when $p=\infty$. Let $\nu\in\mathcal C_\Q$.  From Proposition~\ref{prop: infCarl} and the fact that $\mathcal C_\T=\mathcal C_\U$, we have that $B_\infty^\T=B_\infty^\U$.

In the context of trees, we also investigate whether there is any connection with the weight class $\BBv(\nu)$ introduced in \cite{CADMMRV2025} and defined by

\begin{equation}\label{BBv}
    \BBv(\nu):=\Bigl\{\omega\colon\X\to (0,\infty)\colon [\omega]_{\BBv(\nu)}:=\sup_{\substack{Q,R\in\T\\Q^{(1)}=R^{(1)}}}\avg{\omega}{Q}\avg{\omega^{-\frac{p'}{p}}}{R}^{\frac{p}{p'}}<\infty\Bigr\}.
\end{equation}

In what follows, we show that $B_p^\Q$ strictly contains $\BBv(\nu)$. In particular, this shows that the weighted inequalities for $\Proj$ established in~\cite{CADMMRV2025} were obtained under a non-sharp weight condition. On the other hand, $B_p^\Q$ is the largest possible weight class for $\Proj^+$ and, consequently, provides a natural sufficient condition for the boundedness of $\Proj$.

\begin{theorem}
    For every $p\in (1,\infty)$, we have $\BBv\subsetneq B_p^\Q$ whenever $q$ is unbounded.
\end{theorem}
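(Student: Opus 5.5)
Two things need to be shown: the inclusion $\BBv\subseteq B_p^\Q=B_p^\T\cap B_p^\U$, and that it is strict when $q$ is unbounded. The natural measure here is $\nu=\mu_\alpha$ on a horocyclic tree, or any $\nu\in\mathcal C_\Q$ for which sectors carry comparable mass near their tops.

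\emph{Inclusion.} Take $\omega\in\BBv$ and write $\sigma=\omega^{-\frac1{p-1}}=\omega^{-p'/p}$.

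We start with a pruned sector $U_x\in\U$. It splits as $U_x=\bigsqcup_{v\in s(x)}T_v$, and any two of these children $Q=T_u$, $R=T_v$ satisfy $Q^{(1)}=R^{(1)}$. Hence
\[
\omega(U_x)\,\sigma(U_x)^{p-1}=\sum_{u}\omega(T_u)\Big(\sum_v\sigma(T_v)\Big)^{p-1}.
\]
For each $u$, the bound $\avg{\omega}{T_u}\avg{\sigma}{T_v}^{p-1}\le[\omega]_{\BBv}$ gives $\sigma(T_v)\le [\omega]^{\frac1{p-1}}_{\BBv}\,\avg{\omega}{T_u}^{-\frac1{p-1}}\nu(T_v)$. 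Summing over $v$ yields
\[
\sigma(U_x)^{p-1}\le[\omega]_{\BBv}\avg{\omega}{T_u}^{-1}\nu(U_x)^{p-1}.
\]
Multiplying by $\omega(T_u)$ and summing over $u$ gives $\avg{\omega}{U_x}\avg{\sigma}{U_x}^{p-1}\le[\omega]_{\BBv}$. So $[\omega]_{p,\U}\le[\omega]_{\BBv}$ with no assumption on $\nu$.

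For sectors $T_x$ we write $T_x=\{x\}\cup U_x$. We then need to control the contributions of the single vertex $x$:
\[
\omega(x)\,\sigma(U_x)^{p-1},\qquad \omega(U_x)\,\sigma(x)^{p-1},\qquad \omega(x)\sigma(x)^{p-1}=\nu(x)^p.
\]
The plan is to use the sibling condition with $Q=T_x$ and $R=T_{x'}$, where $x'$ is a sibling of $x$ (these exist since $q\ge2$), or with $Q=R=T_x$. This gives $\avg{\omega}{T_x}\avg{\sigma}{T_x}^{p-1}\le[\omega]_{\BBv}$ directly, because $Q=R$ is permitted in \eqref{BBv}. So $[\omega]_{p,\T}\le[\omega]_{\BBv}$ is immediate. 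Combining the two bounds, $\BBv\subseteq B_p^\T\cap B_p^\U=B_p^\Q$.

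\emph{Strictness.} We need $\omega\in B_p^\Q$ for which the cross condition with $Q\neq R$ siblings fails. The idea is to exploit a vertex $x$ with large $q(x)$. Put a large value on one child sector and keep the weight equal to $1$ elsewhere: $\omega=1+(t-1)1_{T_y}$ with $y\in s(x)$. Then $\avg{\omega}{T_y}=t$, while for a sibling $R=T_{y'}$ we have $\avg{\sigma}{R}=1$, so the $\BBv$ quantity is at least $t$. On the other hand, the $\Q$-averages see $T_y$ diluted inside $U_x$. The $B_p^\Q$ constant is roughly
\[
1+t\,\frac{\nu(T_y)}{\nu(U_x)},
\]
which is of size $1+t/q(x)$ in the horocyclic model. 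Choosing $t=q(x)$ keeps the $B_p^\Q$ constant bounded while the $\BBv$ constant is about $q(x)$.

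To obtain a single weight, we use a family of pairwise disjoint sectors $R_n$ as in the Example, with $q(x_{R_n^{(1)}})\to\infty$, and set
\[
\omega=1+\sum_n\big(q_n-1\big)1_{R_n},\qquad q_n:=q(x_{R_n^{(1)}}).
\]
The $\BBv$ constant is then unbounded. Checking $\omega\in B_p^\Q$ requires the tail estimate used in the Example: on sectors or pruned sectors containing several $R_m$, one has $\avg{\sigma}{\cdot}\le1$ and
\[
\avg{\omega}{\cdot}\lesssim 1+\sum_m q_m\,\frac{\nu(R_m)}{\nu(\cdot)},
\]
which is controlled by geometric decay, as in \eqref{eq: mu set}.

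\emph{Main obstacle.} The main difficulty is this last uniform bound. One must verify that the geometric decay of $\mu_\alpha(R_m)$ relative to the enclosing sets absorbs the factors $q_m$. This is exactly why the first Example used $q^\alpha$, and we would mimic that computation, choosing the $R_n$ sufficiently spread out (for instance with sparse levels) if needed.
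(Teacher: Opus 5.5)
Your proposal is correct and follows essentially the paper's argument. Taking $Q=R$ in the definition of $\BBv$ gives the $\T$ bound, and splitting $U_x$ into sibling sectors gives the $\U$ bound; your summed version is equivalent to the paper's maximum argument. Strictness then comes from a large weight on one child sector of a high-branching vertex, which is diluted inside $U_x$. The paper uses height $q(x_n)^{1/2}$ where you use the borderline $q_n$, and that also works.

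The tail bound you flag as the main obstacle closes at once for horocyclic $\nu$. Indeed $q_m\nu(R_m)=\nu(U_{x_{R_m^{(1)}}})\le\nu(R_m^{(1)})$. The Carleson packing condition for $\Q$ then bounds the sum of these over all $R_m\subseteq Q$ by a constant times $\nu(Q)$, for every $Q\in\Q$.
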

        \begin{proof}
Let $\omega\in\BBv$. Taking $Q=R$ in the definition of $[\omega]_{\BBv}$, we immediately have $[\omega]_{p,\T}\leq[\omega]_{\BBv}$.
It remains to control the characteristic associated with $\U$. Fix $x\in\X$. We have
\[\avg \omega{U_x}=\sum_{y\in s(x)}\avg \omega{T_y}\frac{\nu(T_y)}{\nu(U_x)}\leq \max_{y\in s(x)}\avg\omega{T_y}.\]
Similarly, 
\[\avg{\omega^{-\frac 1{p-1}}}{U_x}\leq \max_{z\in s(x)}\avg{\omega^{-\frac 1{p-1}}}{T_z},\]
so that
\[\avg \omega{U_x}\avg{\omega^{-\frac 1{p-1}}}{U_x}^{p-1}\leq \max_{y,z\in s(x)}\avg\omega{T_y} \avg{\omega^{-\frac 1{p-1}}}{T_z}^{p-1}\leq [\omega]_{\BBv}.\]

We now build an example to show that the containment is proper.
Consider a horocyclic tree such that $\|q\|_\infty=\infty$ and assume $\nu\in \mathcal C_\Q$ to be horocyclic. Let $\{ Q_{n}\} _{n\in\N},\{ R_{n}\}_{n=1}^{\infty}\subseteq\mathcal{T}$ such that $Q_{n+1}^{(2)}=R_{n+1}^{(2)}$ and $Q_{n+1}^{(1)}\neq R_{n+1}^{(1)}=Q_n$. Observe that the family $\{ R_{n}\}_{n=1}^{\infty}$ is pairwise disjoint by construction. Let $x_n\in \X$ be such that $Q_n=T_{x_n}$. We choose the family in such a way that $\sup_{n\in\N}q(x_n)=+\infty$. We consider the weight
\[\omega =1+\sum_{n\in\mathbb N}(q(x_n)^{\frac 12}-1)1_{R_{n+1}}.\]
Observe that $\omega\not\in \BBv$. Indeed,
\[[\omega]_{\BBv}\geq
\sup_{n\in\N}\Bigg[\avg\omega{R_{n+1}}\sup_{\substack{Q\in\T\\Q^{(1)}=Q_n}}\avg{\omega^{-\frac 1{p-1}}}Q\Bigg]\gtrsim \sup_{n\in\N}\avg \omega{R_{n+1}}=\sup_{n\in\N} q(x_n)^{\frac 12}=+\infty. \]
On the other hand, by the fact that $\nu$ is horocyclic, we have $q(x_n)\nu(R_{n+1})\leq\nu(Q_n)$. Hence, $\omega(R_{n+1})=q(x_n)^{\frac 12}\nu(R_{n+1})\leq \nu(Q_n)$ so that we can easily conclude that $\avg \omega{Q_n}\lesssim 1$. Furthermore, for $m\geq n$, $\omega(Q_m)\leq q(x_{m-1})^{\frac 12}\nu(R_m)\leq q(x_{m-1})^{-\frac 12} \nu(Q_{m-1})$ and, by iteration, we have
\[\avg \omega{Q_n^{(1)}}\leq  1+\frac{1}{\nu(Q_n^{(1)})}\sum_{m=n}^{\infty}\frac{\nu(Q_{m-1})}{q(x_{m-1})^{\frac 12}}\leq 1+\sum_{m=n}^{\infty}\frac{1}{q(x_{m-1})^{\frac 12}}\prod_{j=n}^{m-1} \frac 1{q(x_{j-1})}\lesssim 1,\]
by the fact that $q\geq 2$.
Using that $\omega\geq 1$ everywhere and that it is constant on all the other sectors, we conclude that $\omega\in B_p^\T$. Similarly, one proves that $\omega\in B_p^\U$.
\end{proof}

\begin{rem}
    Let $\nu\in\mathcal C_\Q$ be a horocyclic measure on a horocyclic tree $\X$, that is $\nu(x)=\nu(y)$ and $q(x)=q(y)$ if $|x|=|y|$. Let $\omega\colon \mathfrak{X}\to (0,+\infty)$ be a horocyclic weight. We have that 
    \[[\omega]_{p,\T}=[\omega]_{p,\U}=[\omega]_{p,\Q},\qquad 1\leq p<\infty. \]
    Furthermore, when $p>1$, we also have $[\omega]_{p,\Q}=[\omega]_{\BBv}$.
    
    Indeed, for the case $p>1$, observe that whenever $Q,R\in\T$ are such that $Q^{(1)}=R^{(1)}$, we have that $\omega(Q)=\omega(R)$ and similarly for $\omega^{-\frac{1}{p-1}}$, hence $\avg{\omega}{Q}=\avg{\omega}{R}$ and $\avg{\omega^{-\frac{1}{p-1}}}{Q}=\avg{\omega^{-\frac{1}{p-1}}}{R}$. Then
        \begin{equation*}
            [\omega]_{\BBv}= \sup_{\substack{Q,R\in\T\\Q^{(1)}=R^{(1)}}}\avg{\omega}{Q}\avg{\omega^{-\frac{1}{p-1}}}{R}^{p-1}=\sup_{Q\in\T}\avg{\omega}{Q}\avg{\omega^{-\frac{1}{p-1}}}{Q}^{p-1}=[\omega]_{p,\T}.
        \end{equation*}
        It is now sufficient to see that $[\omega]_{p,\T}=[\omega]_{p,\U}$. Notice that for a horocyclic weight $\eta\colon \mathfrak{X}\to (0,+\infty)$, we have
            \begin{align}\label{eq: media di eta}
                \avg{\eta}{\anc{x_Q}{1}}=\frac{1}{q(\anc{x}{1}_Q)\nu(Q)}\eta(\anc{x_Q}{1})=\frac{1}{q(\anc{x}{1}_Q)}\sum_{\substack{R\in\T\\R^{(1)}=Q^{(1)}}}\frac{\eta(R)}{\nu(Q)}=\avg{\eta}{Q}.
        \end{align}
        Hence we get
        \begin{align*}
            [\omega]_{p,\T}= \sup_{Q\in\T}\avg{\omega}{Q}\avg{\omega^{-\frac 1{p-1}}}{Q}^{p-1}
            =\sup_{Q\in\T}\avg{\omega}{\anc{\dot Q}{1}}\avg{\omega^{-\frac 1{p-1}}}{\anc{\dot Q}{1}}^{p-1}=\sup_{U\in\U}\avg{\omega}{U}\avg{\omega^{-\frac 1{p-1}}}{U}^{p-1}=[\omega]_{p,\U}.
        \end{align*}
        Finally, for the case $p=1$, we use \eqref{eq: media di eta} together with the fact that, for every $T\in\T$,
\begin{equation*}\|\omega^{-1}1_{\anc{\dot T}{1}}\|_{\infty}=\max_{\substack {R\in\T\\R^{(1)}=T^{(1)}}}\|\omega^{-1}1_{R}\|_{\infty}=\|\omega^{-1}1_{T}\|_{\infty}, \end{equation*}
        to say that
        \begin{equation*}[\omega]_{1,\T}
        =\sup_{Q\in\T}\avg{\omega}{Q}\|\omega^{-1}1_{Q}\|_\infty
        =\sup_{Q\in\T}\avg{\omega}{\anc{\dot Q}{1}}\|\omega^{-1}1_{\anc{\dot Q}{1}}\|_\infty
        =\sup_{U\in\U}\avg{\omega}{U}\|\omega^{-1}1_{U}\|_\infty
        =[\omega]_{1,\U}.
        \end{equation*}
\end{rem}

\noindent\textbf{Acknowledgments.} J.M. Conde-Alonso was partially supported by the 2025 Leonardo Grant for Scientific Research and Cultural Creation \texttt{LEO25-1-19094-CBB-MAT-169} (BBVA Foundation). The BBVA Foundation accepts no responsibility for the opinions, statements and contents included in the project and/or the results thereof, which are entirely the responsibility of the authors. M.~Monti and E.~Rizzo are members of GNAMPA–INdAM. M.~Monti is partially supported by the GNAMPA–INdAM Project 2026 \lq\lq Transferring Harmonic Analysis between Discrete Structures and Manifolds" (CUP E53C25002010001). E.~Rizzo was partially supported by the GNAMPA–INdAM Project 2025 \lq\lq Function Spaces and Applications" (CUP E5324001950001). A large portion of this work was completed while E.~Rizzo was a post-Doc at the Università degli Studi di Milano, +Dip Eccellenza.

\bibliographystyle{abbrv}
\bibliography{biblio}

\end{document}